 \newtheorem{thm}{Theorem}[section]
 \newtheorem{lem}[thm]{Lemma}
 \newtheorem*{rem*}{Remark}
\newtheorem{theorem}{Theorem}
\newtheorem{lemma}{Lemma}
 \theoremstyle{definition}
 \numberwithin{equation}{section}
\newcommand\RR{\mathbb R} 
\newcommand\vct[2]{\left(\begin{array}{c}#1 \\ #2\end{array}\right)}
\newcommand\X{X}
\newcommand\Y{Y}
\newcommand{\dps}{\displaystyle}
\newcommand{\ii}{\infty}
\newcommand\R{{\ensuremath {\mathbb R} }}
\newcommand\C{{\ensuremath {\mathbb C} }}
\newcommand\1{{\ensuremath {\mathds 1} }}
\renewcommand\phi{\varphi}
\newcommand{\cR}{\mathcal{R}}
\newcommand{\cK}{\mathcal{K}}
\newcommand{\cE}{\mathcal{E}}
\newcommand{\cH}{\mathcal{H}}
\newcommand{\cL}{\mathcal{L}}
\newcommand{\alp}{\boldsymbol{\alpha}}
\renewcommand{\epsilon}{\varepsilon}
\newcommand\pscal[1]{{\ensuremath{\left\langle #1 \right\rangle}}}
\newcommand{\norm}[1]{ \left| \! \left| #1 \right| \! \right| }
\renewcommand{\ge}{\geqslant}
\renewcommand{\le}{\leqslant}
\renewcommand{\geq}{\geqslant}
\renewcommand{\leq}{\leqslant}
\renewcommand{\tilde}{\widetilde}
\newcommand{\nn}{\nonumber}
 \newcommand{\be}[1]{\begin{equation}\label{#1}}
\newcommand{\ee}{\end{equation}}
\def\squarebox#1{\hbox to #1{\hfill\vbox to #1{\vfill}}}
\begin{document}

\title[Uniqueness and non-degeneracy for a nuclear NLS equation]{Uniqueness and non-degeneracy for a nuclear nonlinear Schr\"odinger equation}

\author[M. Lewin]{Mathieu LEWIN}
\address{CNRS \& Laboratoire de Math\'ematiques (UMR 8088), Universit\'e de Cergy-Pontoise, F-95000 Cergy-Pontoise, France.}
\email{Mathieu.Lewin@math.cnrs.fr}

\author[S. Rota Nodari]{Simona ROTA NODARI}
\address{Laboratoire Paul Painlevé (UMR 8524), Université Lille 1 Sciences et Technologies, F-59655 Villeneuve d'Ascq, France.}
\email{Simona.Rota-Nodari@math.univ-lille1.fr}

\date{\today}

\begin{abstract}
We prove the uniqueness and non-degeneracy of positive solutions to a cubic nonlinear Schrödinger (NLS) type equation that describes nucleons. The main difficulty stems from the fact that the mass depends on the solution itself. As an application, we construct solutions to the $\sigma$--$\omega$ model, which consists of one Dirac equation coupled to two Klein-Gordon equations (one focusing and one defocusing).
\end{abstract}

\maketitle

\tableofcontents

\section{Introduction and main results}

\subsection{A nonlinear Schrödinger type equation}
The purpose of this paper is to study the uniqueness and non-degeneracy of solutions to a nonlinear Sch\"odinger-type equation, arising from the minimization of the following energy functional
\begin{equation}
\frac{1}{2}\int_{\R^3}\frac{|\bm\sigma\cdot\nabla \psi(x)|^2}{(1-|\psi(x)|^2)_+}\,dx-\frac{a}{4}\int_{\R^3}|\psi(x)|^4\,dx,
\label{eq:energy}
\end{equation}
under the mass constraint $\int_{\R^3}|\psi(x)|^2\,dx=1$. Here $x_+=\max(x,0)$ is the positive part, $\psi=(\psi_1,\psi_2)\in L^2(\R^3,\C^2)$ is a 2-spinor that describes the quantum state of a nucleon (a proton or a neutron),
$$\sigma _1=\left( \begin{matrix} 0 & 1
\\ 1 & 0 \\ \end{matrix} \right),\quad  \sigma_2=\left( \begin{matrix} 0 & -i \\
i & 0 \\  \end{matrix}\right),\quad  \sigma_3=\left( 
\begin{matrix} 1 & 0\\  0 &-1\\  \end{matrix}\right)$$
are the Pauli matrices and $\bm\sigma\cdot \nabla:=\sum_{j=1}^3\sigma_j\partial_{x_j}$.
The equation of interest is
\begin{equation}
	\label{eqNLS}
	-\bm{\sigma}\cdot\nabla\left(\frac{\bm{\sigma}\cdot\nabla\psi}{1-|\psi|^2}\right)+\frac{|\bm{\sigma}\cdot\nabla\psi|^2}{\left(1-|\psi|^2\right)^2}\psi-a|\psi|^2\psi+b\psi=0,
\end{equation}
with $b$ the Lagrange multiplier associated with the mass constraint. 

This equation can as well be written in the form of a system of two coupled Dirac-like equations
\begin{equation}\label{eqNLSsystem}
	\left\{
		\begin{aligned}
		&-i\bm{\sigma}\cdot\nabla\zeta+|\zeta|^2\psi-a|\psi|^2\psi+b\psi=0,\\
		&i\bm{\sigma}\cdot\nabla\psi+\left(1-|\psi|^2\right)\zeta=0\,.
		\end{aligned}
	\right.
\end{equation}
Indeed, the above model can formally be deduced from a relativistic model involving one Dirac particle coupled with two auxiliary classical fields (the so-called \emph{$\sigma-\omega$ model}), in a specific non-relativistic limit that will be described in detail below. In this limit, the equations for the classical fields can be solved explicitly, leading to the nonlinear system~\eqref{eqNLSsystem} and the corresponding nonlinear energy functional~\eqref{eq:energy}, expressed in terms of $\psi$ only.

The term $-(a/4)\int_{\R^3}|\psi|^4$ in~\eqref{eq:energy} is the usual nonlinear Schr\"odinger attraction which describes here the confinement of the nucleons. On the other hand, the denominator $(1-|\psi|^2)_+$ can be interpreted as a mass depending on the state $\psi$ of the nucleon, and it describes a phenomenon of saturation in the system. A high density $|\psi|^2$ generates a lower mass, which itself prevents from having a too high density. Mathematically speaking, this term enforces the additional constraint $0\leq|\psi|\leq 1$, which is very important for the stability of the energy~\eqref{eq:energy}. Without the $\psi$-dependent mass, the model is of course unstable and the energy functional is unbounded from below. The mass term $(1-|\psi|^2)_+$ allows us to consider the minimization of the energy~\eqref{eq:energy} in space dimensions $d\geq1$ without any limitation on $d$ and $a>0$, even if $d=3$ is the interesting physical case. We remark that the upper bound $1$ on the particle density $|\psi(x)|^2$ arises after an appropriate choice of units.

Let us emphasize that, in the model presented above, spin is taken into account since $\psi$ takes values in $\C^2$. Under the additional assumption that the state of the nucleon is an eigenfunction of the spin operator, the energy must be restricted to functions of the special form
\begin{equation}
\psi(x)=\phi(x)\begin{pmatrix}1\\ 0\end{pmatrix},
\label{eqsolspin}
\end{equation}
leading to the simpler functional
\begin{equation}
\cE_a(\phi):=\frac{1}{2}\int_{\R^d}\frac{|\nabla \phi(x)|^2}{(1-|\phi(x)|^2)_+}\,dx-\frac{a}{4}\int_{\R^d}|\phi(x)|^4\,dx.
\label{eq:energy_nospin}
\end{equation}
It is an open problem to show that minimizers of the original energy~\eqref{eq:energy} are necessarily of the special form~\eqref{eqsolspin}. In principle, the spin symmetry could be broken. In this paper we will however restrict ourselves to the simplified functional~\eqref{eq:energy_nospin}, which we study in any space dimension $d\geq1$. The corresponding Euler-Lagrange equation simplifies to 
\begin{equation}
-\nabla\cdot\left(\frac{\nabla\phi}{1-|\phi|^2}\right)+\frac{|\nabla\phi|^2}{\left(1-|\phi|^2\right)^2}\phi-a|\phi|^2\phi+b\phi=0
\label{eq:nonlinear_nospin}
\end{equation}

To our knowledge, the above model was mathematically studied for the first time in \cite{EstRot-12}, where Esteban and the second author of this paper formally derived the equation~\eqref{eqNLS} from its relativistic counterpart, and then proved the existence of radial square integrable solutions of (\ref{eq:nonlinear_nospin}).
This result has then been generalized in \cite{TreRot-13}, where the existence of infinitely many square-integrable excited states (solutions with an arbitrary but finite number of sign changes) was shown.

In \cite{EstRot-13}, Esteban and the second author used a variational approach to prove the existence of minimizers for the spin energy~\eqref{eq:energy}, for a large range of values for the parameter $a$. The model is translation-invariant, hence uniqueness cannot hold. Usual symmetrization techniques do not obviously apply due to the presence of the Pauli matrices $\sigma_k$'s but a natural conjecture is that all minimizers are of the form
\begin{equation}
\psi(x)=\phi(|x|)\begin{pmatrix}1\\ 0\end{pmatrix},
\label{eqsolrad}
\end{equation}
after a suitable space translation and a choice of spin orientation.

The approach of~\cite{EstRot-13} applies as well to the simplified no-spin model~\eqref{eq:energy_nospin}, and the proof works in any dimension. The result in this case is the following.

\begin{theorem}[Existence of minimizers in the no-spin case~\cite{EstRot-13}]\label{thm:minimizers}
Let $d\geq1$ and
\begin{equation}
E(a):=\inf\left\{\cE_a(\phi)\ :\ \int_{\R^d}\frac{|\nabla\phi|^2}{(1-|\phi|^2)_+}<\ii,\ \int_{\R^d}|\phi|^2=1\right\}.
\label{eq:E(a)}
\end{equation}
There exists a universal number $0\leq a_d<\ii$ such that 

\smallskip

\noindent $\bullet$ For $a\leq a_d$, $E(a)=0$ and there is no minimizer;

\smallskip

\noindent $\bullet$ For $a>a_d$, $E(a)<0$ and all the minimizing sequences are precompact in $H^1(\R^d)$, up to translations. There is at least one minimizer $\phi$ for the minimization problem $E(a)$ and it can be chosen such that $0\leq\phi\leq1$, after multiplication by an appropriate  phase factor. It solves the nonlinear equation~\eqref{eq:nonlinear_nospin} for some $b>0$.
\end{theorem}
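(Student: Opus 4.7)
The plan is a mass-constrained minimization argument via Lions' concentration--compactness principle, adapted to the unusual self-dependent kinetic term. First I record the a priori bounds. Finite energy forces $|\phi|\le 1$ a.e., so $\|\phi\|_{L^4}^4\le\|\phi\|_{L^2}^2=1$, and writing $T:=\tfrac12\int|\nabla\phi|^2/(1-|\phi|^2)_+$ gives $\|\nabla\phi\|_{L^2}^2\le 2T=2\cE_a(\phi)+(a/2)\|\phi\|_{L^4}^4\le 2\cE_a(\phi)+a/2$, so any minimizing sequence $(\phi_n)$ is bounded in $H^1(\R^d)$. The auxiliary change of unknown $\chi=\arcsin\phi$ turns the weighted gradient integral into the ordinary Dirichlet energy $\int|\nabla\chi|^2$, which makes weak-$H^1$ lower semicontinuity of $\cE_a$ transparent.

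Next I identify the threshold. Define $a_d=\inf\{a\ge 0:E(a)<0\}$. The map $a\mapsto E(a)$ is non-increasing, and $E(a)\le 0$ for every $a\ge 0$, as seen by testing the spread-out profile $\phi_L(x)=L^{-d/2}\phi_0(x/L)$ whose energy tends to $0$ as $L\to\infty$. That $a_d<\infty$ follows by exhibiting a concentrated trial for which the attractive term $-\tfrac{a}{4}\int|\phi|^4$ beats the kinetic part at large $a$. For $a<a_d$ non-existence is by contradiction: a minimizer $\phi_0$ with $\cE_a(\phi_0)=0$ and $\|\phi_0\|_{L^2}=1$ has $\int|\phi_0|^4>0$, so $\cE_{a'}(\phi_0)=-\tfrac{a'-a}{4}\int|\phi_0|^4<0$ for every $a'>a$, forcing $a_d\le a$ and contradicting $a<a_d$; the endpoint $a=a_d$ is excluded by a more delicate rescaling argument.

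For $a>a_d$, I apply concentration--compactness to the densities $|\phi_n|^2$. \emph{Vanishing} is excluded because $\phi_n\to 0$ in $L^4(\R^d)$ would give $\liminf\cE_a(\phi_n)\ge 0>E(a)$. \emph{Dichotomy} is excluded via strict subadditivity $E(a)<E^{(\theta)}(a)+E^{(1-\theta)}(a)$ for $\theta\in(0,1)$, where $E^{(m)}(a)$ is the infimum at mass $m$: the scaling $\phi\mapsto\phi(\cdot/L)$ applied to a mass-$1$ minimizer $\phi$ (with $L=m^{1/d}$) yields the bound $E^{(m)}(a)\le m^{(d-2)/d}T-mV$ where $V:=\tfrac{a}{4}\int|\phi|^4$ and $T-V=E(a)<0$, and the required strict inequality follows from the concavity properties of this envelope in $m$. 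The remaining compactness case gives, up to translations, $\phi_n\wto\phi$ in $H^1(\R^d)$ and $\phi_n\to\phi$ strongly in $L^p(\R^d)$ for $p\in(2,2^*)$, with $\|\phi\|_{L^2}=1$ and $|\phi|\le 1$ a.e.

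Weak lower semicontinuity then yields $\cE_a(\phi)\le E(a)$, so $\phi$ is a minimizer. Kato's inequality $|\nabla|\phi||\le|\nabla\phi|$ allows me to replace $\phi$ by $|\phi|$, giving $0\le\phi\le 1$. Standard variational arguments produce the Euler--Lagrange equation \eqref{eq:nonlinear_nospin} with multiplier $b$; testing against $\phi$ yields $b=a\int|\phi|^4-\int|\nabla\phi|^2/(1-|\phi|^2)^2$, and the Pohozaev identity obtained by testing against $x\cdot\nabla\phi$, combined with $\cE_a(\phi)<0$, implies $b>0$. The main obstacle I expect is the strict subadditivity step: the self-dependent weight $1/(1-|\phi|^2)$ spoils the exact scaling of the usual NLS argument, so dichotomy must be excluded through dilation-based trial functions respecting the pointwise bound $|\phi|\le 1$, a step that becomes particularly delicate in the critical dimension $d=2$.
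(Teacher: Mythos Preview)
This theorem is not proved in the present paper; it is imported from~\cite{EstRot-13}, and the only information the paper supplies about the argument is that it uses Lions' concentration--compactness, that ``special localization functions'' were needed to handle the denominator $(1-|\phi|^2)_+$, and that the concavity of $a\mapsto E(a)$ is an important ingredient. Your overall scheme matches this description, and your use of the substitution $u=\arcsin\phi$ to turn the weighted kinetic term into an ordinary Dirichlet integral is exactly the idea the present paper exploits later (Lemma~\ref{lem:rearrangement}); the authors even remark that this observation ``can be used to simplify'' the existence proof of~\cite{EstRot-13}.

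That said, your dichotomy step has a real gap. You produce the bound $E^{(m)}(a)\le m^{(d-2)/d}T-mV$ by dilating a ``mass-$1$ minimizer'', but (i) invoking a minimizer at this stage is circular, and more importantly (ii) this is an \emph{upper} bound on $E^{(m)}$, whereas excluding dichotomy requires a \emph{lower} bound on $E^{(\theta)}+E^{(1-\theta)}$. Because the dilation $\phi\mapsto\phi(\cdot/L)$ is a bijection of the admissible set (it preserves the pointwise constraint $|\phi|\le1$), one in fact has the exact identity $E^{(m)}(a)=m^{(d-2)/d}\,E\big(a\,m^{2/d}\big)$, and strict subadditivity then reduces to an inequality for the concave non-increasing function $a\mapsto E(a)$ --- precisely the mechanism the paper singles out, not the ``concavity in $m$'' you invoke. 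A second point you do not address is the localization itself: in the dichotomy alternative one must cut $\phi_n$ by smooth cutoffs and control $\int|\nabla(\chi\phi_n)|^2/(1-|\chi\phi_n|^2)_+$, which does not localize in the naive IMS fashion; the $\arcsin$ substitution does not obviously help here because the mass constraint becomes the nonlinear condition $\int\sin^2u=1$. This is exactly where the ``special localization functions'' of~\cite{EstRot-13} enter, and your sketch offers no replacement for them.
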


The method used in~\cite{EstRot-13} to prove Theorem~\ref{thm:minimizers} is based on Lions' concentration-compactness technique~\cite{Lions-84,Lions-84b} and the main difficulty was to deal with the denominator $(1-|\phi|^2)_+$, for which special localization functions had to be introduced. Because the energy~\eqref{eq:energy_nospin} depends linearly on the parameter $a$, the function $a\mapsto E(a)$ is concave non-increasing, which is another important fact used in the proof of~\cite{EstRot-13}. 

The critical strength $a_d$ of the nonlinear attraction is the largest for which $E(a)=0$ and it can simply be defined by
$$a_d=\inf_{\substack{\phi\in H^1(\R^d)\\ 0\leq|\phi|\leq1}}\left\{\frac{\dps 2\left(\int_{\R^d}|\phi|^2\right)^{\tfrac{2}{d}}\left(\int_{\R^d}\frac{|\nabla\phi|^2}{(1-|\phi|^2)_+}\right)}{\dps\int_{\R^d}|\phi|^4}\right\}.$$
It can easily be verified that $a_1=0$ in dimension $d=1$, that 
$$a_2=\inf_{\substack{\phi\in H^1(\R^2)\\ 0\leq|\phi|\leq1}}\left\{2\frac{\dps\norm{\phi}_{L^2(\R^2)}^2\norm{\nabla\phi}_{L^2(\R^2)}^2}{\norm{\phi}_{L^4(\R^2)}^4}\right\}>0$$
is related to the Gagliardo-Nirenberg-Sobolev constant in dimension $d=2$, and that $a_d>0$ in higher dimensions. Estimates on $a_d$ have been provided in dimension $d=3$ in~\cite{EstRot-13} and similar bounds can be derived in higher dimensions by following the same method.

\subsection{Uniqueness and non-degeneracy of solutions}

After the two works~\cite{EstRot-12,EstRot-13}, it remained an open problem to show that minimizers are all radial and unique, up to a possible translation and multiplication by a phase factor. The purpose of this paper is to answer this question. Our main result is the following.

\begin{theorem}[Uniqueness and non-degeneracy in the no-spin case]\label{thm:main}
The nonlinear equation~\eqref{eq:nonlinear_nospin} has no non-trivial solution $0<\varphi<1$ in $L^2(\RR^d)$ when $0< a\leq 2b$.
For $a>2b>0$, the nonlinear equation~\eqref{eq:nonlinear_nospin} admits a unique solution $0<\phi<1$ that tends to 0 at infinity, modulo translations and multiplication by a phase factor. It is radial, decreasing, and non-degenerate.
\end{theorem}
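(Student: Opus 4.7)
The whole proof will rest on a single change of variables: writing $\phi=\sin u$ with $u\in (0,\pi/2)$, which is well defined thanks to the pointwise bound $0<\phi<1$, one computes $\nabla\phi=\cos u\,\nabla u$, $1-\phi^2=\cos^2 u$ and observes that the two $|\nabla u|^2$ terms produced by the quasilinear operator in \eqref{eq:nonlinear_nospin} cancel exactly. After multiplication by $\cos u$, the equation becomes the \emph{semilinear}
\begin{equation*}
-\Delta u + W(u) = 0,\qquad W(u):=(b-a\sin^2 u)\sin u\cos u = V'(u),
\end{equation*}
on $\R^d$, with $V(u):=\tfrac{b}{2}\sin^2 u-\tfrac{a}{4}\sin^4 u$ and $u\to 0$ at infinity. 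Justifying the reduction to real-valued $\phi$ requires checking that any complex solution with non-vanishing modulus must have constant phase, which I would do by a direct computation taking real and imaginary parts in the original equation.

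The non-existence half of the theorem is then almost immediate. For $a\leq 2b$ the potential $V(u)=\sin^2 u\bigl(\tfrac{b}{2}-\tfrac{a}{4}\sin^2 u\bigr)$ is strictly positive on $(0,\pi/2)$ (the endpoint $\pi/2$ being excluded by $\phi<1$). For $d\geq 2$, the Pohozaev identity $\tfrac{d-2}{2}\int|\nabla u|^2+d\int V(u)=0$ forces $\int V(u)\leq 0$, hence $u\equiv 0$. For $d=1$ the conservation law $\tfrac{1}{2}(u')^2=V(u)$ (with constant fixed by $u,u'\to 0$ at infinity) gives $V(u_{\max})=0$ at any interior maximum, again forcing $u\equiv 0$.

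For $a>2b$, I would first apply the Gidas--Ni--Nirenberg moving-plane method to the smooth semilinear equation $-\Delta u+W(u)=0$ with $u\to 0$ at infinity, obtaining radial symmetry and strict monotonicity $\partial_r u<0$ after a suitable translation. This reduces the uniqueness question to the ODE
\begin{equation*}
u''(r)+\tfrac{d-1}{r}u'(r)=W(u(r)),\qquad u'(0)=0,\ u(\infty)=0,
\end{equation*}
with initial datum $u(0)\in(\arcsin\sqrt{2b/a},\pi/2)$ forced by Pohozaev. Uniqueness of the positive radial ground state will then come from a shooting argument in the spirit of McLeod--Serrin and Serrin--Tang: the sign structure of $W$ (negative on $(0,\arcsin\sqrt{b/a})$, positive thereafter) and the positivity $V(\pi/2)=(a-2b)/4>0$ place the problem in the classical semilinear ground-state framework, the missing ingredient being a monotonicity hypothesis (of the type $uW'(u)/W(u)$ non-increasing past the first zero of $W$) to be verified directly from the special structure $V(u)=P(\sin^2 u)$ with $P$ of degree two.

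Finally, non-degeneracy of the linearized operator $\cL=-\Delta+W'(u)$ on $L^2(\R^d)$ will be obtained by the standard spherical-harmonic decomposition at the unique radial solution: monotonicity $\partial_r u<0$ makes $\partial_r u$ the ground-state eigenfunction of the $\ell=1$ radial operator, identifying the angular kernel with $\mathrm{span}\{\partial_{x_j}u\}_{j=1,\ldots,d}$; positive centrifugal terms exclude kernel for $\ell\geq 2$; and the radial sector is killed by a Sturm-oscillation argument exploiting the transversality of the ground-state branch in the shooting picture. I expect the main obstacle to be the verification of the Serrin--Tang-style monotonicity condition for our trigonometric $W$, since it is not a standard polynomial nonlinearity; the fallback would be a direct ODE comparison argument specifically tailored to the structure $V(u)=P(\sin^2 u)$.
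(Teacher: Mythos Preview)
Your proposal is correct and follows essentially the same route as the paper: the change of variables $\phi=\sin u$ reducing to a semilinear equation, Gidas--Ni--Nirenberg for radial symmetry, a McLeod-type shooting argument for radial uniqueness hinging on the monotonicity of $uF'(u)/F(u)$ past the zero of $F$ (which the paper verifies explicitly in a lemma, exactly as you anticipate), and the spherical-harmonic decomposition for non-degeneracy. The only cosmetic difference is that for non-existence when $a\le 2b$ you invoke Pohozaev directly, whereas the paper first reduces to radial and then uses the monotonicity of the one-dimensional energy $H(r)=\tfrac12(u')^2-V(u)$; both arguments are standard and equivalent here.
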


This theorem is the equivalent of a celebrated similar result for the nonlinear Schr\"odinger equation (see, e.g.,~\cite[App. B]{Tao-06} and~\cite{Frank-13} for references). Our main contribution is the remark that the equation~\eqref{eq:nonlinear_nospin} can be rewritten in terms of $u:=\arcsin(\phi)$ as a simpler nonlinear Schr\"odinger equation
\begin{equation}
-\Delta u+b\sin(u)\cos(u)-a\sin^3(u)\cos(u)=0.
\label{eq:simpler_intro}
\end{equation}
Applying a classical argument of McLeod~\cite{McLeod-93} (as explained in~\cite[App. B]{Tao-06} and in~\cite{Frank-13}) allows to prove the non-degeneracy and uniqueness in the radial case. That any solution of~\eqref{eq:nonlinear_nospin} is necessarily radial decreasing then follows from the moving plane method~\cite{GidNiNir-81,LiNi-93}. The proof of Theorem~\ref{thm:main} is provided in Section~\ref{sec:proof} below.

Let us remark that, since equation~\eqref{eq:nonlinear_nospin} is invariant under multiplications by a phase factor, we can always suppose that a solution $\varphi$ is real-valued. Hence, in~\cite[Appendix A.1]{EstRot-13} it has been proved that any solution $\varphi\in H^1(\RR^d)$ is such that $|\varphi|^2\le 1$ a.e. in $\RR^d$ whenever $a\ge b>0$. As a consequence, the change of variables $u=\arcsin(\varphi)$ makes sense whenever $a\ge b>0$. 

\subsection{Application: solutions to a Dirac Klein-Gordon equation}

As an application of Theorem~\ref{thm:main}, we are able to construct a branch of solutions of the underlying Dirac equation, that converges to the non-relativistic solution $\phi$ in the limit, thereby justifying the formal arguments of~\cite{EstRot-12}. We explain this now.

We restrict ourselves to $d=3$ for simplicity (but the results are similar in other dimensions). We consider one relativistic nucleon in interaction with two scalar fields $S$ (the \emph{$\sigma$--field}) and $V$ (the \emph{$\omega$--field}). As described for instance in~\cite{Walecka-74,SerWal-86,Reinhard-89,Ring-96,Walecka-04,MenTokZhoZhaLonGen-06}, the corresponding equation is
\begin{equation}
\begin{cases}
-i\alp\cdot\nabla \Psi+\beta(m+S)\Psi+V\Psi=(m-\mu)\Psi,\\
(-\Delta+m_\sigma^2)S=-g_\sigma^2\Psi^*\beta\Psi,\\
(-\Delta+m_\omega^2)V=g_\omega^2|\Psi|^2,\\
\end{cases}
\label{eq:Dirac}
\end{equation}
where
$$\alpha_k=\begin{pmatrix}
0&\sigma_k\\ \sigma_k & 0
\end{pmatrix},\qquad k=1,2,3,\ 
\beta=\begin{pmatrix}
\1_2 & 0\\
0 & -\1_2
\end{pmatrix}
$$
are the Dirac matrices and $\Psi\in L^2(\R^3,\C^4)$ is now a 4-spinor. The wavefunction $\Psi$ should in principle be normalized in $L^2$ but, here, we think of fixing $\mu$ instead of imposing $\norm{\Psi}_{L^2}=1$. Any non-trivial solution $\Psi$ to~\eqref{eq:Dirac} also gives a normalized solution after an appropriate change of parameters. In most physics papers, the equation for the $\sigma$-field $S$ contains a nonlinear term as well (for instance including vacuum polarization effects~\cite{Reinhard-89}), 
$$\big(-\Delta+m_\sigma^2+U'(S)\big)S=-g_\sigma^2\Psi^*\beta\Psi,$$
but we restrict ourselves to the simpler linear case for convenience.

The fields $S$ and $V$ are respectively focusing and defocusing, which can be seen from the different signs in the two Klein-Gordon equations. On the other hand, they have very different effects, since $S$ modifies the mass $m$ in the same way for the upper and lower spinors, whereas $V$ is repulsive for the upper spinor and attractive for the lower spinor. This statement is clarified when the Dirac equation is written in terms of
$$\Psi=\vct{\psi}{\zeta}$$
as
\begin{equation}
\label{eqnrlphichi3dintro}
\begin{cases}
-i\bm{\sigma}\cdot\nabla \zeta +(S+V+\mu)\psi=0,\\
-i\bm{\sigma}\cdot\nabla \psi =(2m-\mu+S-V)\zeta,\\
(-\Delta+m_\sigma^2)S=-g_\sigma^2(|\psi|^2-|\zeta|^2),\\
(-\Delta+m_\omega^2)V=g_\omega^2(|\psi|^2+|\zeta|^2).\\
\end{cases}
\end{equation}
We see that $S+V$ and $S-V$ respectively appear in the two equations.

In our units, the non-relativistic limit corresponds to $m,m_\sigma,m_\omega\to\ii$, with all the masses being of the same order. On the contrary to atomic physics, in nuclear physics the coupling constants $g_\omega$ and $g_\sigma$ are very large, comparable to the masses. It is therefore customary to work in a regime where $g_\omega/m_\omega$ and $g_\sigma/m_\sigma$ are fixed or, even, large. In the two Klein Gordon equations, the Laplacian can then be neglected in such a way that
$$S\simeq -\frac{g_\sigma^2}{m_\sigma^2}(|\psi|^2-|\zeta|^2)\quad\text{and}\quad V\simeq \frac{g_\omega^2}{m_\omega^2}(|\psi|^2+|\zeta|^2)$$
and hence
$$S+V\simeq \left(\frac{g_\omega^2}{m_\omega^2}-\frac{g_\sigma^2}{m_\sigma^2}\right)|\psi|^2+\left(\frac{g_\omega^2}{m_\omega^2}+\frac{g_\sigma^2}{m_\sigma^2}\right)|\zeta|^2,$$
$$S-V\simeq -\left(\frac{g_\omega^2}{m_\omega^2}+\frac{g_\sigma^2}{m_\sigma^2}\right)|\psi|^2-\left(\frac{g_\omega^2}{m_\omega^2}-\frac{g_\sigma^2}{m_\sigma^2}\right)|\zeta|^2.$$
As usual, in the non-relativistic regime, the lower spinor $\chi$ is of order $1/\sqrt{m}$. Simple effective equations will then be obtained in the limit.

%\bigskip

\subsubsection*{The $\sigma$ model}
In order to better illustrate the regime of interest for the $\sigma$--$\omega$ model, let us first discuss the case of the $\sigma$ model, in which $V\equiv0$ and $g_\omega\equiv0$. The equation~\eqref{eqnrlphichi3dintro} then reduces to
\begin{equation}
\label{eq:sigma}
\begin{cases}
-i\bm{\sigma}\cdot\nabla \zeta +S\psi+\mu\psi=0,\\
-i\bm{\sigma}\cdot\nabla \psi =(2m-\mu+S)\zeta,\\
(-\Delta+m_\sigma^2)S=-g_\sigma^2(|\psi|^2-|\zeta|^2),\\
\end{cases}
\end{equation}
The interesting regime is then $g_\sigma/m_\sigma$ of order 1, say $(g_\sigma/m_\sigma)^2=\kappa$ fixed. It can be proved that $2m-\mu+S\simeq 2m$ and the usual NLS equation is recovered in the limit, after a simple scaling. The precise result is the following.

\begin{theorem}[Non-relativistic limit of the $\sigma$ model]\label{thm:sigma}
Let $\kappa,\mu,c$ be positive constants. Then for $m$ large enough, the equation~\eqref{eq:sigma} admits a branch of solutions of the special form
\begin{equation}\label{eq:form_Dirac_sigma}
	\Psi_m(x)=\left(
	\begin{array}{c}
		\varphi_m(|x|)\vct{1}{0}\\
		-i\chi_m(|x|)\;\bm\sigma\cdot\frac{x}{|x|}\vct{1}{0}
	\end{array}
	\right),
\end{equation}
with
\begin{equation}
m_\sigma=cm,\qquad  \left(\frac{g_\sigma}{m_{\sigma}}\right)^2=\kappa.
\label{eq:NR-limit-parameters_sigma}
\end{equation}
In the limit $m\to\ii$, we have 
$$\varphi_m\big(\cdot/\sqrt{m}\big)\to \phi_{\rm NLS}\quad\text{and}\quad 2\sqrt{m}\,\chi_m\big(\cdot/\sqrt{m}\big)\to \phi'_{\rm NLS}$$
strongly in $H^2(\R^3)$, where $\phi_{\rm NLS}$ is the unique positive radial solution of
\begin{equation}
-\Delta\phi_{\rm NLS}-2\kappa\phi^3_{\rm NLS}+2\mu\phi_{\rm NLS}=0.
\label{eq:NLS}
\end{equation}
\end{theorem}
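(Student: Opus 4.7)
My plan is to reduce the Dirac--Klein--Gordon system \eqref{eq:sigma} under the ansatz \eqref{eq:form_Dirac_sigma} to a single scalar equation for the large upper component, rescale to the non-relativistic variable $y=\sqrt m\,x$, and recognize the result as a smooth perturbation of the NLS equation \eqref{eq:NLS}, to which I would then apply the implicit function theorem around the non-degenerate ground state $\phi_{\rm NLS}$.

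First, I would insert \eqref{eq:form_Dirac_sigma} into \eqref{eq:sigma} and use the standard Pauli identities $\bm\sigma\cdot\nabla\bigl(f(r)\vct{1}{0}\bigr)=f'(r)\,\bm\sigma\cdot\hat x\vct{1}{0}$ and $\bm\sigma\cdot\nabla\bigl(f(r)\,\bm\sigma\cdot\hat x\vct{1}{0}\bigr)=(f'(r)+2f(r)/r)\vct{1}{0}$ to reduce the Dirac part to the coupled radial ODEs $\chi_m'+2\chi_m/r=(S_m+\mu)\varphi_m$ and $\varphi_m'=(2m-\mu+S_m)\chi_m$, together with $(-\Delta+c^2m^2)S_m=-\kappa c^2m^2(\varphi_m^2-\chi_m^2)$. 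Setting $\tilde\varphi_m(y):=\varphi_m(y/\sqrt m)$, $\tilde\chi_m(y):=2\sqrt m\,\chi_m(y/\sqrt m)$ and $\tilde S_m(y):=S_m(y/\sqrt m)$, the second Dirac equation solves algebraically as $\tilde\chi_m=\tilde\varphi_m'/\bigl(1-(\mu-\tilde S_m)/(2m)\bigr)$, while the rescaled Klein--Gordon equation gives $\tilde S_m=-\kappa\bigl(1-\Delta_y/(c^2m)\bigr)^{-1}\bigl(\tilde\varphi_m^2-\tilde\chi_m^2/(4m)\bigr)$, where the resolvent is uniformly bounded on Sobolev spaces and converges in norm to the identity as $m\to\infty$. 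Substituting both into the first equation and noting $\tilde\chi_m'+2\tilde\chi_m/|y|=\Delta_y\tilde\varphi_m+O(1/m)$ yields a single reduced equation
\[
\cF(\tilde\varphi,\,m^{-1}):=-\Delta_y\tilde\varphi-2\kappa\tilde\varphi^3+2\mu\tilde\varphi+m^{-1}R_m(\tilde\varphi)=0,
\]
with $R_m$ smooth and uniformly bounded on bounded sets of $H^2_{\mathrm{rad}}(\R^3)$, and $\cF(\cdot,0)$ exactly the NLS operator of \eqref{eq:NLS}.

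I would then apply the implicit function theorem to $\cF:H^2_{\mathrm{rad}}(\R^3)\times[0,\epsilon)\to L^2_{\mathrm{rad}}(\R^3)$ at $(\phi_{\rm NLS},0)$: the Fréchet derivative $\partial_{\tilde\varphi}\cF(\phi_{\rm NLS},0)=-\Delta+2\mu-6\kappa\phi_{\rm NLS}^2$ is an isomorphism by the classical radial non-degeneracy of the NLS ground state (Kwong's theorem). This produces, for $m$ large, a unique smooth branch $\tilde\varphi_m\to\phi_{\rm NLS}$ in $H^2_{\mathrm{rad}}(\R^3)$; the algebraic identities then give $\tilde\chi_m\to\phi'_{\rm NLS}$ and $\tilde S_m\to-\kappa\phi_{\rm NLS}^2$ in $H^2$, and undoing the rescaling recovers \eqref{eq:form_Dirac_sigma} with the stated convergences. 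The main obstacle is the uniform-in-$m$ functional-analytic bookkeeping of the reduction step: one must verify that $(1-\Delta_y/(c^2m))^{-1}$ depends smoothly on $1/m$ as an operator on each $H^s_{\mathrm{rad}}(\R^3)$, that composition with the pointwise nonlinearities preserves this smoothness on $H^2_{\mathrm{rad}}$, and that the remainder $R_m$ is uniformly Lipschitz on bounded sets; once this is in place the IFT step itself is routine, since the non-degeneracy of $\phi_{\rm NLS}$ is a classical input.
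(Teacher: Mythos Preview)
Your strategy is correct and leads to the result, but it is organized differently from the paper. The paper does not give a separate proof of this theorem; it states that the argument of Section~\ref{sec:relativistic_limit} (written out for the $\sigma$--$\omega$ model) applies verbatim, using the classical non-degeneracy of $\phi_{\rm NLS}$ in place of Theorem~\ref{thm:non-degenerate}. The key organizational difference is that the paper does \emph{not} eliminate $\chi$ and $S$ to reduce to a scalar equation in $\tilde\varphi$. Instead it keeps the rescaled variables $(\varphi,\chi,S)$ as independent unknowns, writes the system as $\cK(\epsilon,\varphi,\chi,S)=0$ with $\epsilon=1/m$, and applies the implicit function theorem directly to this map between product Sobolev spaces. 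The invertibility of the linearization at $\epsilon=0$ is then checked by first solving the (now genuinely algebraic) last two lines for the increments of $\chi$ and $S$, and recognizing the remaining scalar equation as the radial restriction of the NLS linearized operator $-\Delta+2\mu-6\kappa\phi_{\rm NLS}^2$. Your route buys a conceptually cleaner scalar endpoint, but at the cost of the nonlinear reduction step: since $\tilde\chi_m$ depends on $\tilde S_m$ and $\tilde S_m$ depends on $\tilde\chi_m$, your ``algebraic'' substitution is really a fixed-point argument that must be set up carefully before the IFT can be invoked.

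One genuine technical point to correct: the resolvent $\epsilon\mapsto(1-\epsilon\Delta_y/c^2)^{-1}$ is \emph{not} smooth (nor even differentiable) in $\epsilon$ as a bounded operator on any fixed $H^s$, since the formal derivative at $\epsilon=0$ is the unbounded operator $\Delta_y/c^2$. The paper flags exactly this issue in the $\sigma$--$\omega$ proof (``we do not prove the continuity of the derivative $\partial_\epsilon\cK$, which fails at $\epsilon=0$'') and circumvents it by invoking a version of the implicit function theorem that requires only continuity of $\cK$ jointly and continuity of $\partial_\Xi\cK$, not of $\partial_\epsilon\cK$. You should replace your smoothness claim by the same device: it suffices that your reduced map $\cF$ is continuous in $(\tilde\varphi,\epsilon)$ and $C^1$ in $\tilde\varphi$ with $\partial_{\tilde\varphi}\cF$ jointly continuous. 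With that adjustment your argument goes through.
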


Functions of the form~\eqref{eq:form_Dirac_sigma} have the lowest possible total angular momentum~\cite[Sec.~4.6.4]{Thaller}. The theorem can be shown by following step by step the method of Section~\ref{sec:relativistic_limit}, using the non-degeneracy of the NLS ground state. Its proof will not be provided in this paper for shortness.

Theorem~\ref{thm:sigma} is not satisfactory from a physical point of view. Indeed, the limit $\phi_{\rm NLS}$ is considered physically unstable since the corresponding energy functional is unbounded from below in dimension 3. Furthermore, in practice $\kappa$ is very large and the corresponding $\phi_{\rm NLS}$ is then very peaked at the origin. In real nuclei, many forces are in action and they tend to compensate in order to avoid this collapse at $0$. It is therefore important to take the $\omega$ field into account.

%\bigskip

\subsubsection*{The $\sigma$--$\omega$ model}
For the $\sigma$--$\omega$ model, the interesting regime is when the parameters $g_\sigma^2/m_\sigma^2$ and $g_\omega^2/m_\omega^2$ behave like $m$, whereas $g_\sigma^2/m_\sigma^2-g_\omega^2/m_\omega^2$ stays bounded, which is the cancellation between the two scalar fields mentioned before. Even if $g_\sigma^2/m_\sigma^2$ diverges, the model still has a nice bounded limit $\phi$, which is precisely the non-relativistic ground state studied in the previous section. 

\begin{theorem}[Non-relativistic limit of the $\sigma$--$\omega$ model]\label{thm:Dirac}
Let $\theta,\lambda,\mu,C,D$ be positive constants such that $\lambda>2\mu\theta$. Then for $m$ large enough, the equation~\eqref{eq:Dirac} admits a branch of solutions of the special form
\begin{equation}
	\Psi_m(x)=\left(
	\begin{array}{c}
		\varphi_m(|x|)\vct{1}{0}\\
		-i\chi_m(|x|)\;\bm\sigma\cdot\frac{x}{|x|}\vct{1}{0}
	\end{array}
	\right),
	\label{eq:form_Dirac}
\end{equation}
with
\begin{equation}
m_\sigma^2=Cm^2,\quad m^2_\omega-m_\sigma^2=D,\quad \left(\frac{g_\sigma}{m_{\sigma}}\right)^2=\theta m,\quad \left(\frac{g_\sigma}{m_{\sigma}}\right)^2-\left(\frac{g_\omega}{m_{\omega}}\right)^2=\lambda.
\label{eq:NR-limit-parameters}
\end{equation}
In the limit $m\to\ii$, we have 
$$\sqrt{\theta}\,\varphi_m\big(\cdot/\sqrt{m}\big)\to \phi\quad\text{and}\quad 2\sqrt{\theta m}\,\chi_m\big(\cdot/\sqrt{m}\big)\to \frac{\phi'}{1-\phi^2}$$
strongly in $H^2(\R^3)$, where $\phi$ is the unique positive solution of~\eqref{eq:nonlinear_nospin} with $a=2\lambda/\theta$ and $b=2\mu$.
\end{theorem}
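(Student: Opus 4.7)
The plan is to construct $\Psi_m$ of the special form \eqref{eq:form_Dirac} by reducing the Dirac--Klein--Gordon system to a single effective equation for $\varphi_m$, which for $m$ large is a small perturbation of \eqref{eq:nonlinear_nospin}, and then to invoke the non-degeneracy from Theorem~\ref{thm:main} together with the implicit function theorem.

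First I would plug the ansatz \eqref{eq:form_Dirac} into \eqref{eqnrlphichi3dintro}. For the angular profile $\bm\sigma\cdot\hat{x}\binom{1}{0}$, the operator $-i\bm\sigma\cdot\nabla$ acts cleanly (this is the lowest total angular momentum state, see Thaller), and the coupled Dirac system reduces to two scalar radial ODEs for $(\varphi_m,\chi_m)$ coupled to $S_m,V_m$. The fields $S_m,V_m$ are then expressed through the scalar radial profiles $|\varphi_m|^2$, $|\chi_m|^2$ via the resolvents $(-\Delta+m_\sigma^2)^{-1}$ and $(-\Delta+m_\omega^2)^{-1}$. At this point I would introduce the rescaled unknowns
\begin{equation*}
\tilde\varphi_m(y):=\sqrt{\theta}\,\varphi_m\!\left(\tfrac{y}{\sqrt{m}}\right),\qquad \tilde\chi_m(y):=2\sqrt{\theta m}\,\chi_m\!\left(\tfrac{y}{\sqrt{m}}\right),
\end{equation*}
so that the lower component becomes $O(1)$ in the rescaled variable and $(-\Delta_x+m_\sigma^2)^{-1}=m^{-2}(C-\Delta_y/m)^{-1}$ can be expanded as a Neumann series in powers of $1/m$.

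Second, I would use the second Dirac equation to solve for $\tilde\chi_m$ in terms of $\tilde\varphi_m$. Using the assumptions \eqref{eq:NR-limit-parameters}, one computes $S_m-V_m \simeq -(2\theta m - \lambda)|\varphi_m|^2+\lambda|\chi_m|^2$, so that
\begin{equation*}
2m-\mu+S_m-V_m \;=\; 2m\bigl(1-|\tilde\varphi_m|^2\bigr) \;+\; R_m,
\end{equation*}
with $R_m$ bounded uniformly in $m$ once expressed in rescaled variables. This yields $\tilde\chi_m = \tilde\varphi_m'/(1-|\tilde\varphi_m|^2) + O(1/m)$, matching the claimed limit $\phi'/(1-\phi^2)$. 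Substituting this expression back into the first Dirac equation, and using $S_m+V_m \simeq -\lambda|\varphi_m|^2+(2\theta m-\lambda)|\chi_m|^2$ together with the identity $(\bm\sigma\cdot\nabla)(\bm\sigma\cdot\nabla)=\Delta$, I would obtain an effective equation for $\tilde\varphi_m$ of the form
\begin{equation*}
-\nabla\!\cdot\!\left(\frac{\nabla \tilde\varphi_m}{1-|\tilde\varphi_m|^2}\right)+\frac{|\nabla \tilde\varphi_m|^2}{(1-|\tilde\varphi_m|^2)^2}\tilde\varphi_m -\tfrac{2\lambda}{\theta}|\tilde\varphi_m|^2\tilde\varphi_m+2\mu\,\tilde\varphi_m \;=\; F_m(\tilde\varphi_m),
\end{equation*}
where $F_m$ collects all remainders from the Neumann expansions of the Klein--Gordon resolvents and from the error $R_m$, and satisfies $F_m\to 0$ together with its derivative in an appropriate topology on any bounded set of radial $H^2$ functions with $\|\tilde\varphi\|_\infty<1$.

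Third, I would apply the implicit function theorem in the space of radial $H^2(\R^3)$ functions around the limit $\phi$ provided by Theorem~\ref{thm:main} for $a=2\lambda/\theta$ and $b=2\mu$; the hypothesis $\lambda>2\mu\theta$ is precisely $a>2b$. Theorem~\ref{thm:main} guarantees both existence of $\phi$ and the non-degeneracy of the linearized operator on the radial sector (the translational zero modes are absent once restricted to radial functions, and the only potential kernel direction along the $L^2$ mass scaling is ruled out by non-degeneracy). The map sending $(m^{-1},\tilde\varphi)$ to the left-hand side minus $F_m(\tilde\varphi)$ is smooth near $(0,\phi)$, its value at $(0,\phi)$ vanishes, and its $\tilde\varphi$-derivative at $(0,\phi)$ is the non-degenerate linearized operator. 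The implicit function theorem then produces, for all sufficiently large $m$, a unique radial solution $\tilde\varphi_m$ near $\phi$, and the convergence statements for $\varphi_m$ and $\chi_m$ follow by unwinding the rescaling.

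The main obstacle is the careful bookkeeping of the Klein--Gordon resolvents $(-\Delta+m_\sigma^2)^{-1}$ and $(-\Delta+m_\omega^2)^{-1}$ after rescaling: one needs to prove that, when applied to densities built from $\tilde\varphi_m,\tilde\chi_m$, these operators produce terms that differ from their formal limits $g_\sigma^2/m_\sigma^2$ and $g_\omega^2/m_\omega^2$ times the density by a remainder that is small in $H^2$ uniformly on bounded sets and depends smoothly on $\tilde\varphi_m$; only then does $F_m$ have the regularity required by the implicit function theorem. The pointwise constraint $0\leq \tilde\varphi_m<1$, crucial for the denominator $1-|\tilde\varphi_m|^2$ to stay away from $0$, is preserved along the branch by continuity from $\phi$ and by the Sobolev embedding $H^2(\R^3)\hookrightarrow L^\infty(\R^3)$.
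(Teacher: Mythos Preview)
Your overall strategy---rescale, identify the limiting equation~\eqref{eq:nonlinear_nospin}, and apply the implicit function theorem using the non-degeneracy from Theorem~\ref{thm:main}---is exactly the paper's. The paper, however, does \emph{not} eliminate $\chi$ and the fields to reduce to a single scalar equation for $\tilde\varphi$. Instead it keeps the full vector of unknowns $\Xi=(\varphi,\chi,W_+,W_-)$, writes the rescaled system as $\cK(\epsilon,\Xi)=0$ on $\cH^2_{\rm rad}\times(H^2_{\rm rad})^2$, and applies the implicit function theorem directly to $\cK$. The linearization $\partial_\Xi\cK(0,\Xi_0)$ is then shown to be an isomorphism by reducing its kernel computation to the radial part of $L_1$, which is trivial by Theorem~\ref{thm:radial}. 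This buys simplicity: your elimination step ``solve the second Dirac equation for $\tilde\chi_m$'' is circular as written, since $W_-$ (hence the coefficient $1+W_-$ multiplying $\tilde\chi$) itself depends on $\tilde\chi$ through the Klein--Gordon equation, so turning $\tilde\chi_m=\tilde\varphi_m'/(1-|\tilde\varphi_m|^2)+O(1/m)$ into a genuine map $\tilde\varphi\mapsto\tilde\chi$ requires its own fixed-point or implicit-function argument that you do not supply.

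There is also a concrete technical error. You assert that the map $(m^{-1},\tilde\varphi)\mapsto(\text{equation})$ is \emph{smooth} near $(0,\phi)$. The paper observes that this fails: after rescaling, the Klein--Gordon resolvent becomes $(\epsilon\cR(\epsilon)(-\Delta)+\1)^{-1}$, and its $\epsilon$-derivative at $\epsilon=0$ involves $-\Delta$ acting on the densities, which loses two derivatives and is unbounded on $H^2$. Equivalently, the first correction in your Neumann series $C^{-2}\Delta_y/m$ applied to $|\tilde\varphi|^2\in H^2$ lands only in $L^2$, so $\partial_\epsilon F_m$ is not continuous into $H^2$ at $\epsilon=0$. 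The paper circumvents this by invoking a version of the implicit function theorem that requires only continuity of $\cK$ and of $\partial_\Xi\cK$ (not of $\partial_\epsilon\cK$); you would need to do the same, and your phrasing ``$F_m\to0$ together with its derivative in an appropriate topology'' is not enough as stated.
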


We refer to~\cite{Walecka-74,Walecka-04},~\cite[Sec.~3]{Reinhard-89} and~\cite[Sec.~2.3]{Ring-96} for a discussion of the validity of this regime for standard nucleons. Typical physical values for the parameters of the model are provided in~\cite[Table~3.1]{Ring-96}.

The proof of Theorem~\ref{thm:Dirac} is provided in Section~\ref{sec:relativistic_limit} and it is based on the implicit function theorem. In other words, we see~\eqref{eqnrlphichi3dintro} as a small  perturbation of~\eqref{eqNLSsystem} and we use the non-degeneracy of $\phi$ to construct a solution. Remark that, thanks to the non-degeneracy property proved in Section~\ref{sec:nondegeneracy_L2}, this argument gives also the local uniqueness of the solution to~\eqref{eqnrlphichi3dintro} around $\varphi$, modulo translations and multiplication by a phase factor. The exact same reasoning can be used for proving Theorem~\ref{thm:sigma}. A similar argument has for instance been used in~\cite{Lenzmann-09}. 

We hope that our work will stimulate further research on this model.

\medskip

\noindent\textbf{Acknowledgement.} The authors acknowledge financial support from the European Research Council (FP7/2007-2013 Grant Agreement MNIQS 258023) and the ANR (NoNAP 10-0101) of the French Ministry of Research. Moreover, the research of the second author was supported by the Labex CEMPI (ANR-11-LABX-0007-01).

%%%%%%%%%%%%%%%%%%%%%%%%%%%%%%%%%%%%%%%%%%%%%%
%%%%%%%%%%%%%%%%%%%%%%%%%%%%%%%%%%%%%%%%%%%%%%
\section{Proof of Theorem~\ref{thm:main}}\label{sec:proof}
%%%%%%%%%%%%%%%%%%%%%%%%%%%%%%%%%%%%%%%%%%%%%%
%%%%%%%%%%%%%%%%%%%%%%%%%%%%%%%%%%%%%%%%%%%%%%

This section is devoted to the proof of Theorem~\ref{thm:main}, which is split in several steps. In the next section, we explicit the change of variable $u=\arcsin(\phi)$  and combine it with symmetric rearrangement to deduce that the minimization problem $E(a)$ can be restricted to radial decreasing functions. This step is not necessary for our analysis but we mention it for completeness, as it gives a simpler existence proof than in~\cite{EstRot-12}. Then, in Section~\ref{sec:moving_planes}, we use the moving plane method to conclude that positive solutions to the nonlinear equation~\eqref{eq:nonlinear_nospin} are radial decreasing. Section~\ref{sec:uniqueness_radial} is devoted to the uniqueness of radial solutions. Finally, we prove the non-degeneracy of the linearized operator in the whole of $L^2(\R^d)$ (modulo the trivial symmetries of the problem) in Section~\ref{sec:nondegeneracy_L2}.

%%%%%%%%%%%%%%%%%%%%%%%%%%%%%%%%%%%%%%%%%%%%%%
\subsection{Minimizers are radial decreasing}\label{sec:rearrangement}
%%%%%%%%%%%%%%%%%%%%%%%%%%%%%%%%%%%%%%%%%%%%%%

We recall that the energy functional is
\begin{equation}
\cE_a(\phi):=\frac{1}{2}\int_{\R^d}\frac{|\nabla \phi(x)|^2}{(1-|\phi(x)|^2)_+}\,dx-\frac{a}{4}\int_{\R^d}|\phi(x)|^4\,dx
\label{eq:energy_nospin2}
\end{equation}
which we study on the subset of $\phi$'s in $L^2(\R^d)$ such that $\int_{\R^d}|\phi|^2=1$ and
$$\int_{\mathbb{R}^d}\frac{|\nabla \phi|^2}{(1-|\phi|^2)_+}<\ii.$$
Since $(1-|\phi|^2)_+\leq 1$, it is clear that any such $\phi$ must be in $H^1(\R^d)$. It was proved in~\cite[Lem. 2.1]{EstRot-13} that it must also satisfy $0\leq|\phi|\leq1$ a.e. The nonlinear term $\int_{\R^d}|\phi|^4$ is then well defined and, since $0\leq|\phi|\leq1$, we conclude that $E(a)\geq -a/4$.

By using rearrangement inequalities and the change of variable $u=\arcsin(\phi)$, we are able to prove that minimizers are always radial-decreasing. This can be used to simplify the proof of Theorem~\ref{thm:minimizers} of~\cite{EstRot-12}.

\begin{lemma}[Minimizers are radial decreasing]\label{lem:rearrangement}
For every $a\geq0$, the minimization problem $E(a)$ can be restricted to radial non-increasing functions. Furthermore, any minimizer of $E(a)$, when it exists, is positive and radial-decreasing, after a possible translation and multiplication by a phase factor.
\end{lemma}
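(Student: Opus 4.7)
The whole argument rests on the substitution $u=\arcsin(\phi)$, which linearizes the gradient term. First I would reduce to a nonnegative admissible function by replacing $\phi$ by $|\phi|$: the chain rule gives $|\nabla|\phi||=|\nabla\phi|$ a.e. and the denominator only depends on $|\phi|^2$, so $\cE_a(|\phi|)=\cE_a(\phi)$ and the mass is preserved. Since any admissible $\phi$ satisfies $0\le|\phi|\le 1$ a.e. by~\cite[Lem.\,2.1]{EstRot-13}, the function $u:=\arcsin(|\phi|)$ takes values in $[0,\pi/2]$ and is well defined a.e. A direct computation using $|\phi|=\sin u$, $1-|\phi|^2=\cos^2 u$ and $\nabla|\phi|=\cos u\,\nabla u$ shows that $|\nabla|\phi||^2/(1-|\phi|^2)=|\nabla u|^2$. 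Consequently
\begin{equation*}
\cE_a(\phi)=\frac12\int_{\R^d}|\nabla u|^2\,dx-\frac{a}{4}\int_{\R^d}\sin^4(u)\,dx,\qquad \int_{\R^d}\sin^2(u)\,dx=1,
\end{equation*}
and $u\in H^1(\R^d)$ vanishes at infinity.

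Next I would apply the Schwarz symmetric decreasing rearrangement $u^\ast$ of $u$. Equimeasurability gives $\int\sin^2(u^\ast)=\int\sin^2(u)$ and $\int\sin^4(u^\ast)=\int\sin^4(u)$, while the P\'olya--Szeg\H{o} inequality yields $\int|\nabla u^\ast|^2\le\int|\nabla u|^2$. Setting $\phi^\ast:=\sin(u^\ast)$, the function $\phi^\ast$ is radial non-increasing (since $\sin$ is increasing on $[0,\pi/2]$), has the same mass as $\phi$, and satisfies $\cE_a(\phi^\ast)\le\cE_a(\phi)$. This proves that the infimum $E(a)$ can be restricted to radial non-increasing admissible functions.

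For the second part, let $\phi$ be a minimizer. Using the two reductions above, $\phi^\ast$ is also a minimizer, so equality must hold in P\'olya--Szeg\H{o} for $u$. By the Brothers--Ziemer equality case, $u$ is then, up to translation, equal to $u^\ast$ provided the critical set $\{\nabla u^\ast=0\}\cap\{0<u^\ast<\mathrm{ess\,sup}\,u^\ast\}$ has zero measure; this can be checked on the Euler--Lagrange equation satisfied by $u^\ast$, which after the change of variables reads
\begin{equation*}
-\Delta u+b\sin(u)\cos(u)-a\sin^3(u)\cos(u)=0,
\end{equation*}
a semilinear elliptic equation whose radial solutions are real-analytic on $(0,\ii)$ and strictly decreasing by ODE arguments. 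Hence, after a translation and the original choice of phase factor, the minimizer $\phi=\sin(u)$ is radial decreasing. Finally, strict positivity follows from the strong maximum principle applied to the above equation for $u\ge 0$, $u\not\equiv 0$, which forces $u>0$ everywhere and thus $0<\phi<1$.

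The main obstacle is the equality case of P\'olya--Szeg\H{o}: one must ensure that the non-degeneracy hypothesis of Brothers--Ziemer is met, which requires a little regularity and non-flatness information on $u^\ast$. This is why it is convenient to work after the change of variables, since the transformed equation is a standard semilinear Schr\"odinger equation to which classical ODE techniques apply, providing the needed strict monotonicity of the radial profile.
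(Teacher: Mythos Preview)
Your approach is essentially the paper's: the substitution $u=\arcsin(|\phi|)$, P\'olya--Szeg\H{o}, and the Brothers--Ziemer equality case verified through analyticity of the radial solution of the transformed semilinear equation. One correction is needed: for complex-valued $\phi$ the chain rule gives only $|\nabla|\phi||\le|\nabla\phi|$ a.e.\ (the diamagnetic inequality, \cite[Thm.~7.8]{LieLos-01}), not equality, so $\cE_a(|\phi|)\le\cE_a(\phi)$ rather than $\cE_a(|\phi|)=\cE_a(\phi)$. This does not affect the restriction to radial non-increasing competitors, but in the second part you cannot simply invoke ``the original choice of phase factor'': if $\phi$ is a minimizer then $|\phi|$ is too, and equality in the diamagnetic inequality is what forces $\phi=e^{i\theta}|\phi|$ for a constant $\theta$---this is the step the paper makes explicit at the end of its proof.
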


\begin{proof}
First we recall that $|\nabla\phi|^2\geq|\nabla|\phi||^2$ a.e., see~\cite[Thm. 7.8]{LieLos-01}. Hence $\cE_a(\phi)\geq\cE_a(|\phi|)$ and the minimization problem can be restricted to functions satisfying $0\leq\phi\leq1$, which we assume from now on. Let then $\phi^*$ be the Schwarz rearrangement of $\phi$. Using that 
$$\nabla\arcsin(\phi)=\frac{\nabla \phi}{\sqrt{1-\phi^2}},$$
we see that
$$\cE_a(\phi)=\frac12\int_{\R^d}|\nabla\arcsin(\phi)|^2-\frac{a}{4}\int_{\R^d}|\phi|^4.$$
Next, we have $\int_{\R^d}|\nabla u|^2\geq \int_{\R^d}|\nabla u^*|^2$ for all $u\in H^1(\R^d)$ and, since $\arcsin$ is increasing, $\arcsin(\phi)^*=\arcsin(\phi^*)$, by~\cite[Chap. 3 \& Lem. 7.17]{LieLos-01}. We conclude that $\cE_a(\phi)\geq\cE_a(\phi^*)$ and the minimization can be restricted to radial non-decreasing functions. 

If $\phi$ is a (possibly non-symmetric) minimizer with $0\leq\phi\leq1$, then $\phi^*$ is also a minimizer and we have $\int_{\R^d}|\nabla\arcsin\phi|^2=\int_{\R^d}|\nabla\arcsin\phi^*|^2$. In general, this does not imply that $\phi$ is itself radial-decreasing, but this will be proved using the nonlinear equation. Denoting $u=\arcsin(\phi)$ and $u^*=\arcsin(\phi^*)$, we see that $u\geq0$ must solve the Euler-Lagrange equation
$$-\Delta u+\frac{a}{2}\sin(2u)\left(\frac{b}{a}-\sin^2(u)\right)=0.
$$
In particular, $u$ must be the first eigenvector of the Schr\"odinger operator 
$$-\Delta+\frac{a\sin(2u)}{2u}\left(\frac{b}{a}-\sin^2(u)\right)$$
and therefore $u>0$. The real-analyticity of $u$ (see, e.g.,~\cite{Morrey-58}) combined with the equality $\int_{\R^d}|\nabla u|^2=\int_{\R^d}|\nabla u^*|^2$ now implies that $u=u^*$, hence $\phi=\phi^*$, after an appropriate space translation, by~\cite{BroZie-88,FerVol-03}. Finally, if $\phi$ is an arbitrary minimizer, the equality $|\nabla\phi|^2=|\nabla|\phi||^2$ implies $\phi=e^{i\theta}|\phi|$ by~\cite[Thm. 7.8]{LieLos-01}. This concludes the proof of the lemma.
\end{proof}

%%%%%%%%%%%%%%%%%%%%%%%%%%%%%%%%%%%%%%%%%%%%%%
\subsection{Positive solutions are radial decreasing}\label{sec:moving_planes}
%%%%%%%%%%%%%%%%%%%%%%%%%%%%%%%%%%%%%%%%%%%%%%

In the previous section, we have shown using rearrangement inequalities that minimizers of $\cE_a$ are necessarily radial-decreasing. Here we use the moving plane method to prove that non-negative solutions of the equation~\eqref{eq:nonlinear_nospin} are also all radial decreasing, which of course also implies Lemma~\ref{lem:rearrangement}.

We recall that the nonlinear equation~\eqref{eq:nonlinear_nospin} can be rewritten in terms of $u=\arcsin(\phi)$ as
\begin{equation}
-\Delta u+\frac{a}{2}\sin(2u)\left(\frac{b}{a}-\sin^2(u)\right)=0.
\label{eq:NLS_u}
\end{equation}
We also remark that $u\in H^1(\R^d)$ when $\phi\in H^1(\R^d)$ and $\int_{\R^d}|\nabla\phi|^2(1-|\phi|^2)_+^{-1}<\ii$.
For simplicity of notation, we denote
\begin{equation}
F(u):=\frac{a}{2}\sin(2u)\left(\sin^2(u)-\frac{b}{a}\right).
\label{eq:def_F}
\end{equation}

\begin{lemma}[Positive solutions are radial-decreasing]
Let $a,b>0$ and $u\in L^2(\R^d)$ be a non-trivial solution of~\eqref{eq:NLS_u} with $0< u\leq\pi/2$. Then, $u$ is radial decreasing about some point in $\R^d$.
\end{lemma}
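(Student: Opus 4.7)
The plan is to apply the moving plane method in the Li--Ni version \cite{LiNi-93}, which is tailored for positive solutions on the whole space that decay at infinity. Two analytic prerequisites must be established first. Since $u$ is bounded ($0<u\leq\pi/2$) and $F$ is smooth, standard elliptic bootstrapping applied to $-\Delta u = -F(u)$ gives $u\in C^\infty(\R^d)$, with uniform interior $C^{1,\alpha}$ estimates on balls of radius one; combined with $u\in L^2(\R^d)$, this forces $u(x)\to 0$ as $|x|\to\infty$. Next, a direct Taylor expansion at the origin yields
\begin{equation*}
F(u) = -b\,u + \Big(a+\tfrac{2b}{3}\Big) u^{3} + O(u^{5})\qquad\text{as } u\to 0^{+},
\end{equation*}
so $F$ is locally Lipschitz and satisfies the sign condition $F(u)\leq -\tfrac{b}{2}\,u$ on some interval $(0,\delta]$. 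Both the decay of $u$ and the linear negative behaviour of $F$ at $0$ are the ingredients that make the moving plane argument work on the unbounded domain $\R^d$.

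For the core argument, fix a unit vector $e\in\R^d$ and, for $\lambda\in\R$, set $\Sigma_\lambda := \{x\in\R^d : x\cdot e>\lambda\}$, $u_\lambda(x) := u\bigl(x-2(x\cdot e-\lambda)\,e\bigr)$, and $w_\lambda := u - u_\lambda$. Since $F\in C^1$, one has
\begin{equation*}
-\Delta w_\lambda + c_\lambda(x)\,w_\lambda = 0 \quad \text{in } \Sigma_\lambda, \qquad w_\lambda = 0 \quad \text{on } \partial\Sigma_\lambda,
\end{equation*}
with $c_\lambda(x):=\int_0^1 F'\bigl(t\,u(x)+(1-t)u_\lambda(x)\bigr)\,dt$ uniformly bounded. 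Using the decay of $u$ together with the sign condition on $F$ near $0$, one shows that $w_\lambda \geq 0$ on $\Sigma_\lambda$ for all $\lambda$ sufficiently large. Then slide $\lambda$ downwards to its critical value
\begin{equation*}
\lambda_*(e) := \inf\bigl\{\lambda\in\R : w_{\lambda'}\geq 0 \text{ on } \Sigma_{\lambda'} \text{ for all } \lambda'\geq\lambda\bigr\}.
\end{equation*}
By continuity $w_{\lambda_*}\geq 0$ on $\Sigma_{\lambda_*}$; the strong maximum principle combined with a Hopf-lemma contradiction with the minimality of $\lambda_*$ forces $w_{\lambda_*}\equiv 0$, i.e.~reflection symmetry of $u$ through the hyperplane $\{x\cdot e = \lambda_*(e)\}$. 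Running this procedure for every $e$ produces a common centre of symmetry $x_0\in\R^d$, and the same sliding step simultaneously yields strict monotone decrease in $|x-x_0|$.

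The delicate point, and the main obstacle, is the initiation and closure of the moving plane on the unbounded domain $\R^d$, where the original Gidas--Ni--Nirenberg scheme does not apply without a rapid decay assumption on $u$. The replacement device provided by Li--Ni is a maximum principle on narrow slabs far from the origin that uses precisely the negative sign $F'(0)=-b<0$ established in the preparatory step. Once this technical input is invoked, the remainder is the classical moving plane choreography. Observe that the argument does not make use of the hypothesis $a>2b$ from Theorem~\ref{thm:main}: positivity of $u$, the upper bound $u\leq \pi/2$, and $a,b>0$ are all that the radial symmetry step requires.
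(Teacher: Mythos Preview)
Your proof is correct and follows the same route as the paper: use elliptic regularity to get $u\to 0$ at infinity, note $F'(0)=-b<0$, and invoke the moving plane method on $\R^d$. The only differences are cosmetic: the paper cites \cite[Thm.~2]{GidNiNir-81} directly (which already handles the case $F'(0)<0$ without any rapid-decay hypothesis, also recording $F(0)=F''(0)=0$), so your remark that the original Gidas--Ni--Nirenberg scheme does not apply here is inaccurate; note also the sign slip ``$-\Delta u=-F(u)$'', which should read $-\Delta u=F(u)$.
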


\begin{proof}
Elliptic regularity gives that $u\to0$ at infinity. Then the result follows immediately from the famous moving plane method. Indeed, noticing that $F(0)=F''(0)=0$ and $F'(0)=-b<0$, we may use~\cite[Thm. 2]{GidNiNir-81}.
%when $b>0$. For $b=0$, we remark that $F^{(3)}(0)=6a-4b$ which is $>0$ when $b=0$ and use~\cite[Thm. 1]{LiNi-93}.
\end{proof}

We have proved that any solution to the equation~\eqref{eq:NLS_u} must be radial-decreasing. The next step consists in studying the uniqueness of radial solutions.

%%%%%%%%%%%%%%%%%%%%%%%%%%%%%%%%%%%%%%%%%%%%%%
\subsection{Uniqueness and non-degeneracy in the radial case}\label{sec:uniqueness_radial}
%%%%%%%%%%%%%%%%%%%%%%%%%%%%%%%%%%%%%%%%%%%%%%

In this section, we study radial solutions to the equation~\eqref{eq:NLS_u}, which then solve
\begin{equation}
\label{eqNLSsinusradial}
\begin{cases}
\dps u''+\frac{d-1}{r}u'+\frac{a}{2}\sin(2u)\left(\sin^2(u)-\frac{b}{a}\right)=0\quad\text{on $\R_+$}\\
u'(0)=0
\end{cases}
\end{equation}
and we concentrate on showing the uniqueness of positive solutions such that $(u(r),u'(r))\to0$ when $r\to\ii$. In dimensions $d\geq2$, the condition $u'(0)=0$ is necessary to avoid a singularity at the origin. In dimension $d=1$, the solution is known to be even about one point and, after a suitable translation we may always assume $u'(0)=0$ as well. More precisely, to prove the existence of solutions in dimension $d=1$, we use the fact that in this case the local energy
\begin{equation}
H(r)=\frac{u'(r)^2}{2}+a\frac{\sin^4(u(r))}{4}-b\frac{\sin^2(u(r))}{2}
\label{eq:energy_Hamiltonian}
\end{equation}
is conserved along the trajectories. However, in dimension $d\ge 2$, the energy $H$ defined by \eqref{eq:energy_Hamiltonian}, decreases:
$$H'(r)=-\frac{(d-1)}{r}u'(r)^2.$$    

The solutions $u_y$ to~\eqref{eqNLSsinusradial} are parametrized by $u_y(0):=y\in(0,\pi/2)$. Using the same arguments as in the proof of~\cite[Lem.~2.6]{EstRot-12} and in particular the fact that the energy $H$ is non-increasing, we can easily show that a solution starting at $y\geq\pi/2$ stays  bigger than $\pi/2$ and hence cannot tend to $0$ at infinity. Moreover, note that the equation~\eqref{eqNLSsinusradial} has the three stationary solutions $u\equiv0$, $u\equiv\pi/2$ and $u\equiv\arcsin(\sqrt{b/a})$. Hence $u(0)\notin\{0,\arcsin(\sqrt{b/a}),\pi/2\}$ is necessary. The following is a reformulation of the result of~\cite{EstRot-12} that was expressed in terms of $\phi=\sin(u)$.

\begin{theorem}[Existence of solutions~\cite{EstRot-12}]\label{thm:shooting}
For $0<a\leq 2b$, there is no non-trivial solution $u$ to~\eqref{eqNLSsinusradial}, such that $u\to0$ at infinity.

For $a>2b>0$, there exists one positive solution $Q$ to~\eqref{eqNLSsinusradial}, such that $(Q,Q')\to(0,0)$ at infinity. It is decreasing, starts at 
\begin{align*}
	&Q(0)=\bar y=\arcsin(\sqrt{2b/a}) & \text{for}\ d=1,\\
	&Q(0)=\bar y\in \big(\arcsin(\sqrt{2b/a}),\pi/2\big)& \text{for}\  d\ge 2,
\end{align*}
and has the following behavior at infinity:
\begin{equation}
Q(r)\underset{r\to\ii}\sim C\,\frac{e^{-\sqrt{b}r}}{r^{\tfrac{d-1}{2}}}\qquad Q'(r)\underset{r\to\ii}\sim -\sqrt{b}\,C\,\frac{e^{-\sqrt{b}r}}{r^{\tfrac{d-1}{2}}},
\label{eq:exponential_decay}
\end{equation}
for some $C>0$.
\end{theorem}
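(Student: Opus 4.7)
The statement is an ODE shooting result, and I would approach it via the phase-plane / Hamiltonian formalism. The key object is the effective potential
\[
W(u) := \frac{a}{4}\sin^4(u) - \frac{b}{2}\sin^2(u) = \frac{\sin^2(u)}{4}\bigl(a\sin^2(u) - 2b\bigr),
\]
so that the Hamiltonian of~\eqref{eq:energy_Hamiltonian}, $H(r)=u'(r)^2/2+W(u(r))$, is conserved when $d=1$ and strictly decreasing when $d\ge 2$ (whenever $u'\not\equiv 0$). The zeros of $W$ in $[0,\pi/2]$ are $0$ and, when $a\ge 2b$, $\arcsin\sqrt{2b/a}$; its critical points in $(0,\pi/2]$ are $\arcsin\sqrt{b/a}$ (well bottom) and $\pi/2$, with $W(\pi/2)=(a-2b)/4$.

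For the non-existence statement, any non-trivial solution tending to $0$ at infinity satisfies $u'(r)\to 0$ by elliptic regularity, hence $H(\ii)=0$. Monotonicity of $H$ forces $W(y)=W(u(0))\ge 0$. When $a<2b$ one has $W<0$ on $(0,\pi/2]$; when $a=2b$, $W\le 0$ on $[0,\pi/2]$ with equality only at the stationary value $u=\pi/2$. In either case no admissible non-trivial $y$ exists.

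For existence when $a>2b$, I would run a shooting in the initial value $y=u(0)\in(0,\pi/2)$. In dimension $d=1$, conservation of $H$ combined with $u'(0)=0$ and $H(\ii)=0$ forces $W(y)=0$, so $y=\arcsin\sqrt{2b/a}$; the phase portrait then identifies the orbit as the heteroclinic from the centre $y$ to the saddle $0$. In dimensions $d\ge 2$, I would partition $(0,\pi/2)$ into the set $\mathcal{A}$ of data for which $u_y$ reaches $0$ with negative derivative at some finite $r$, the set $\mathcal{B}$ of data for which $u_y$ stays strictly positive without converging to $0$, and a residual $\mathcal{C}$ containing the target profile. Openness of $\mathcal{A}$ is immediate by continuous dependence. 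For $y\le\arcsin\sqrt{2b/a}$ one has $W(u_y(r))\le W(y)\le 0$ by monotonicity of $H$, which traps $u_y$ in a compact subset of $(0,\arcsin\sqrt{b/a}]$ and yields an open non-empty subset of $\mathcal{B}$. Non-emptiness of $\mathcal{A}$ comes from taking $y$ close to $\pi/2$: the linearisation at the unstable equilibrium $\pi/2$ (exponential rate $\sqrt{a-b}$) eventually ejects $u_y$, and since $W(\pi/2)>0$ the trajectory retains enough energy, after the dissipation accumulated in a fixed ball, to overshoot $0$. Then $\bar y:=\inf\mathcal{A}$ lies in $\mathcal{C}$; the resulting $Q=u_{\bar y}$ is positive and decreasing, and $W(\bar y)>H(\ii)=0$ gives $\bar y\in(\arcsin\sqrt{2b/a},\pi/2)$.

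Finally, the asymptotics~\eqref{eq:exponential_decay} follow from linearising near $u=0$: the decaying branch of $v''+\tfrac{d-1}{r}v'-bv=0$ has the stated Bessel-type behaviour, and the cubic perturbation $O(u^3)$ is absorbed by a contraction on the Duhamel integral at infinity. The main obstacle is the quantitative overshoot step in dimension $d\ge 2$: one must balance the time $u_y$ spends near the unstable equilibrium $\pi/2$ (logarithmic in $\pi/2-y$) against the dissipative loss of $H$ due to the friction $(d-1)/r$, so as to guarantee that enough mechanical energy survives to push the trajectory across $0$.
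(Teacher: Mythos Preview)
Your sketch follows essentially the same route as the paper, which cites~\cite{EstRot-12} for the shooting proof and reproduces here only the non-existence argument via the monotone Hamiltonian $H$. Your partition $\mathcal A,\mathcal B,\mathcal C$ is exactly the paper's $S_-,S_+,S_0$; the paper describes the construction as $\bar y=\sup S_+$ rather than your $\bar y=\inf\mathcal A$, but these are dual once both $S_\pm$ are known to be open. The asymptotics are treated the same way (linearisation at $0$, with the paper deferring to~\cite{GidNiNir-81}).

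One genuine point you pass over: the assertion ``$\bar y:=\inf\mathcal A$ lies in $\mathcal C$'' also requires $\mathcal B=S_+$ to be open, not just $\mathcal A$. This is not immediate from continuous dependence. The argument (Lemma~\ref{lem:prop_S_+} in the paper) is that for any $y\in S_+$ the derivative $u_y'$ must have a first zero $r'_y$, and at that point $H(r'_y)<0$; this strict inequality is stable under perturbation of $y$ and traps $u_z$ away from $0$ for all later times. Without this, nothing prevents $\bar y\in\mathcal B$. The paper also remarks that the variational existence (Theorem~\ref{thm:minimizers}) combined with rearrangement (Lemma~\ref{lem:rearrangement}) produces a radial decreasing solution directly, hence $S_0\neq\emptyset$ without any overshoot estimate---this is the shortcut that bypasses the obstacle you correctly flag as the delicate step. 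A minor slip: your trapping region for small $y$ should read $(0,\arcsin\sqrt{2b/a})$ rather than $\arcsin\sqrt{b/a}$, and you need $y$ strictly below that value so that $W(y)<0$.
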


\begin{figure}[p]\centering
\includegraphics[width=9cm]{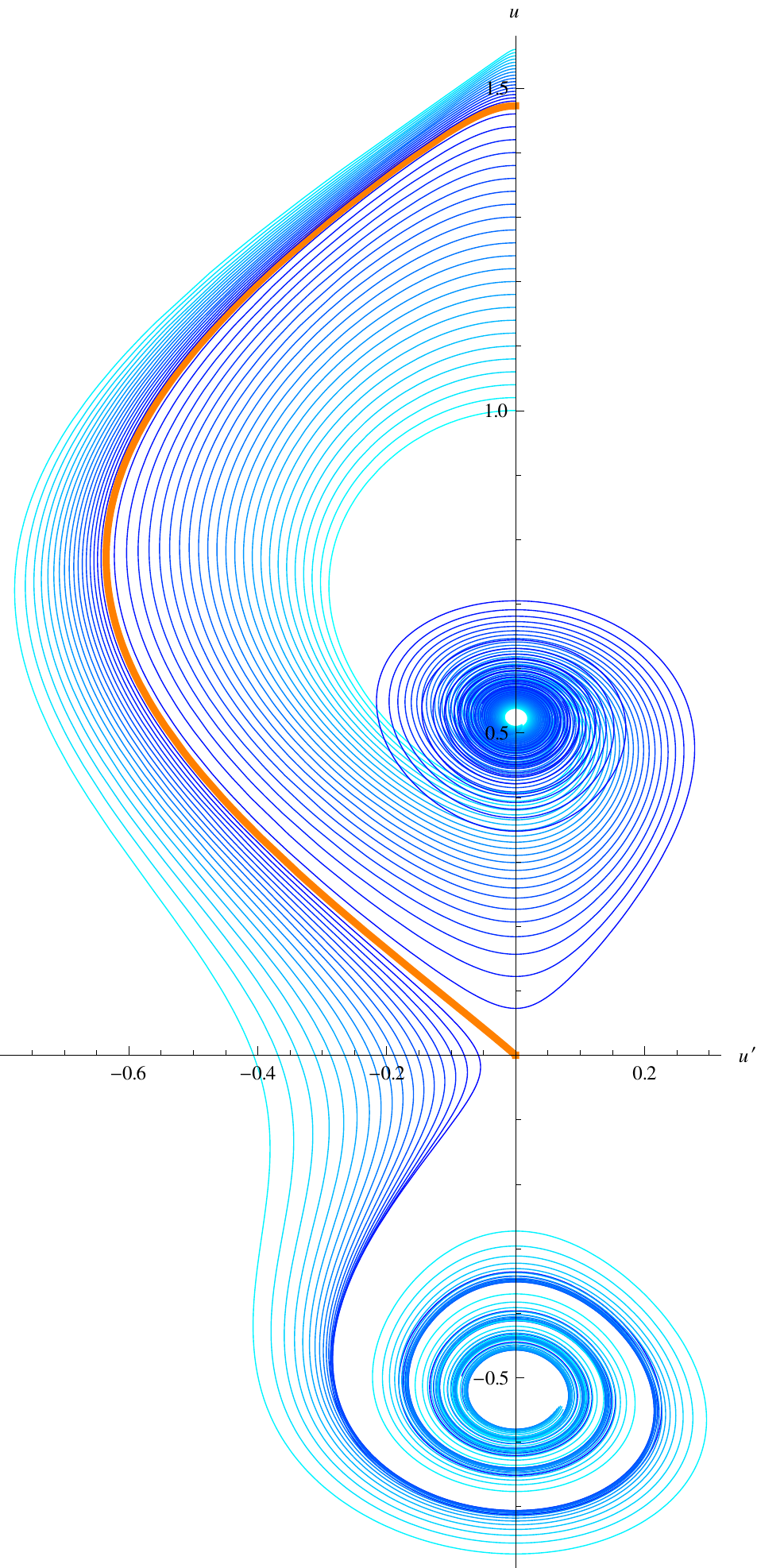}
\caption{Phase portrait with several solutions $(u'_y,u_y)$ including the ground state $Q$, for $a=4$ and $b=1$, in dimension $d=3$.\label{fig:portrait}}
\end{figure}

The proof used in~\cite{EstRot-12}, which is presented for $d=3$ but can be generalized for all $d\ge 2$, is based on a shooting method consisting in increasing $y$ continuously starting from $0$ (Figure~\ref{fig:portrait}). A byproduct of the proof is that all the other solutions $u_y$ with $0< y=u_y(0)<\bar y$ do not tend to $0$ at infinity. Hence it will remain to prove that the solutions $u_y$ with $\bar y<y<\pi/2$ necessarily vanish at some point $r_y\in\R_+$. The explicit decay rate~\eqref{eq:exponential_decay} was not stated in~\cite{EstRot-12}, but it is a classical fact whose proof can for instance be read in~\cite{GidNiNir-81}. As remarked above, the result of Lemma~\ref{lem:rearrangement} can be use to simplify this proof.

For completeness we quickly explain the non-existence part in Theorem~\ref{thm:shooting} which is needed below and is itself taken from~\cite[Prop 2.1]{EstRot-12}. The idea is to use that the local energy \eqref{eq:energy_Hamiltonian} is non-increasing.
% \begin{equation}
% H(r)=\frac{u'(r)^2}{2}+a\frac{\sin^4(u(r))}{4}-b\frac{\sin^2(u(r))}{2}
% \label{eq:energy_Hamiltonian}
% \end{equation}
% decreases:
% $$H'(r)=-\frac{(d-1)}{r}u'(r)^2.$$
This implies that any solution satisfying $u'(0)=0$ and $(u,u')\to(0,0)$ at infinity must be such that 
$$1>\sin^2(u(0))\geq \frac{2b}{a}.$$
Hence $a/2b>1$ is a necessary condition for the existence of $u$. Moreover, we see that $\bar y>\arcsin(\sqrt{2b/a})$ which is strictly above the stationary solution $\arcsin(\sqrt{b/a})$.

The main result of this section is the following

\begin{theorem}[Uniqueness and non-degeneracy of radial ground states]\label{thm:radial}
For $a>2b>0$ and $d\geq1$, the solution $Q$ of Theorem~\ref{thm:shooting} is the only non-trivial positive solution $u$ of~\eqref{eqNLSsinusradial} such that $(u,u')\to(0,0)$ at infinity.

Furthermore, $Q$ is non-degenerate: the unique solution $v$ to
\begin{equation}
\begin{cases}
\dps L(v)=v''+\frac{d-1}{r}v'+F'(Q)v=0\\
v(0)=1\\
v'(0)=0
\end{cases}
\label{eq:linearized}
\end{equation}
diverges exponentially fast when $r\to\ii$. More precisely, if $d\ge 2$, $v$ satisfies $v(r)\to-\ii$ and $v'(r)\to-\ii$ exponentially fast when $r\to\ii$.
\end{theorem}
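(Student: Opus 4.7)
My approach is to apply the classical shooting-plus-linearization technique of McLeod \cite{McLeod-93}, as developed in \cite[App. B]{Tao-06} and \cite{Frank-13}, to the trigonometric nonlinearity $F(u) = (a/2)\sin(2u)(\sin^2 u - b/a)$ of \eqref{eq:def_F}. The structural input is that on $[0, \pi/2]$, $F$ vanishes exactly at $0$, $\arcsin\sqrt{b/a}$ and $\pi/2$, with $F < 0$ on $(0, \arcsin\sqrt{b/a})$ and $F > 0$ on $(\arcsin\sqrt{b/a}, \pi/2)$, and $F'(0) = -b$. Its antiderivative $\tilde F(u) = a \sin^4 u/4 - b \sin^2 u/2$ vanishes at $u = 0$ and at $u = \bar y := \arcsin\sqrt{2b/a}$, is negative on $(0, \bar y)$ and positive on $(\bar y, \pi/2)$. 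These are precisely the hypotheses under which McLeod's method applies.

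\emph{Dimension $d = 1$.} The Hamiltonian $H$ in \eqref{eq:energy_Hamiltonian} is conserved. Decay of $(u, u')$ forces $H \equiv 0$, and together with $u'(0) = 0$ this pins down $u(0) = \bar y$; the reduced first-order equation $u' = -\sqrt{-2 \tilde F(u)}$ then uniquely determines $Q$. For non-degeneracy, $L Q' = 0$ by translation invariance, with $Q'$ odd, while the solution $v$ of \eqref{eq:linearized} is even. Their constant Wronskian $W = v Q'' - v' Q'$ satisfies $W(0) = Q''(0) \neq 0$; if $v$ stayed bounded at infinity then $v, v'$ would have to decay exponentially (the only bounded solutions at infinity of $v'' - b v = 0$ being those in the decaying mode), so $W$ would vanish at infinity, contradicting $W \equiv Q''(0)$. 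Hence $v$ blows up exponentially.

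\emph{Dimension $d \geq 2$.} I first perform an asymptotic analysis at infinity: since $F'(Q(r)) \to -b$ exponentially fast, Frobenius-type theory for $\xi'' + (d-1)\xi'/r - b\xi = 0$ gives two fundamental solutions behaving like $e^{\pm \sqrt{b} r}/r^{(d-1)/2}$. Any $w$ with $Lw = 0$ is therefore either exponentially decaying or exponentially blowing up. To prove uniqueness, I apply McLeod's shooting argument to the family $\{u_y\}_{y \in (\arcsin\sqrt{2b/a}, \pi/2)}$: track the sign changes of $\xi_y = \partial_y u_y$ (which solves $L_y \xi_y = 0$ with $\xi_y(0) = 1$, $\xi_y'(0) = 0$) using Sturm comparison on the sub-intervals determined by the sign of $F'(u_y)$. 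The sign structure of $F$ and $\tilde F$ listed above, combined with the zero count for $\xi_y$, shows that the family $\{u_y\}$ crosses the one-dimensional ``stable manifold of $0$'' at $r = \infty$ transversally at $y = \bar y$, and nowhere else. This yields uniqueness of $Q$ and, at the same time, the transversality statement that $v = \xi_{\bar y}$ has a nontrivial component along the exponentially growing mode at infinity; by the asymptotic dichotomy, $v$ therefore blows up exponentially. Since $v(0) = 1 > 0$ while $v$ has by the same Sturm analysis exactly one sign change on $(0, \infty)$, the leading coefficient of the growing mode is negative, so $v(r), v'(r) \to -\infty$ exponentially.

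The main obstacle is the McLeod zero-count for $\xi_y$ across the full trajectory of $u_y$. This requires controlling the sign of $F'$ along $u_y$ (noting that $F'$ itself changes sign inside $(0, \pi/2)$ near the stationary value $\arcsin\sqrt{b/a}$) and carefully applying Sturm comparison on each resulting sub-interval. Verifying that the trigonometric nonlinearity $F$ satisfies all the precise structural hypotheses required by the McLeod framework — such as a suitable monotonicity of $F(u)/u$ or the convexity of $\tilde F$ on the relevant range — and executing the zero-count rigorously is where most of the work lies.
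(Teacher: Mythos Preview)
Your overall plan matches the paper's: McLeod's shooting-plus-linearization framework applied to the transformed equation, with the $d=1$ case handled by the conserved Hamiltonian and a Wronskian against $Q'$. For $d\ge 2$ the paper likewise partitions initial data into sets $S_+,S_0,S_-$, shows that for every $y\in S_0$ the variation $v_y=\partial_y u_y$ has exactly one zero, and then proves $v_y,v_y'\to-\infty$ exponentially, which forces $S_0$ to be a single point.

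The gap is in the structural hypothesis on $F$ that makes the $d\ge 2$ argument close. Neither of your candidates holds here: $\tilde F''=F'$ changes sign on $(0,\pi/2)$ (indeed $F'(0)=-b<0$ while $F'(\arcsin\sqrt{b/a})>0$), so $\tilde F$ is not convex, and $F(u)/u$ is not monotone on the relevant range either. The property the paper actually isolates and verifies (its Lemma on $F$) is that $x\mapsto xF'(x)/F(x)$ is strictly \emph{decreasing} on $(\arcsin\sqrt{b/a},\pi/2)$; equivalently, for every $\lambda>1$ the function $I(x)=xF'(x)-\lambda F(x)$ has exactly one root in that interval, with $I'<0$ there. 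This is what drives the decisive step: one takes the Wronskian of $v$ against the combined test function $f=u+cru'$, with $c>0$ chosen so that $f$ vanishes at the unique zero $r_*$ of $v$, and obtains
\[
\big(r^{d-1}(f'v-v'f)\big)'=r^{d-1}\,v\,\big(uF'(u)-(1+2c)F(u)\big).
\]
The single-root property of $I$ forces the right-hand side to be negative for all $r>r_*$, and from there the exponential blowup of $v$ and $v'$ to $-\infty$ follows. Your proposed mechanism, ``Sturm comparison on the sub-intervals determined by the sign of $F'(u_y)$'', is not how the argument runs and would not by itself yield either the one-zero count (which the paper gets by a Wronskian against $u'$ together with a continuity argument from the stationary solution $u\equiv\arcsin\sqrt{b/a}$) or the divergence. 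So the missing idea is precisely the monotonicity of $xF'(x)/F(x)$ and the Wronskian test function $u+cru'$ that exploits it.
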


\begin{proof}
In dimension $d=1$, the result follows immediately from the Hamiltonian feature of the problem, based on the energy~\eqref{eq:energy_Hamiltonian} and the fact that $(0,0)$ is a non-degenerate critical point of $H$, when $b>0$. In particular, the divergence of the solution $v$ to the linearized equation~\eqref{eq:linearized} can be proved by computing the Wronskian $(v'Q'-vQ'')'=Q'L(v)-vL(Q')=0$, using that $L(Q')=0$. We deduce that $v'(r)Q'(r)-v(r)Q''(r)=-Q''(0)=a/2\sin(2Q(0))(\sin^2Q(0)-b/a)>0$ which cannot converge to $0$ at infinity. 

In the following we assume $d\geq2$. There are many existing results dealing with the uniqueness (and, often, the non-degeneracy as well) of radial solutions to semi-linear equations of the type $\Delta u+F(u)=0$. After the pioneering works on the NLS nonlinearity~\cite{Coffman-72,Kwong-89}, many authors introduced various conditions on the function $F$ that ensure uniqueness, see, e.g.~\cite{PelSer-83,LeoSer-87, KwoZha-91,McLeod-93,SerTan-00}. Our particular function $F$ as defined in~\eqref{eq:def_F} satisfies some of the assumptions required in these works. For instance uniqueness can be directly obtained from~\cite[Thm. 1']{SerTan-00} in dimensions $d\geq3$, by means of Lemma~\ref{lem:prop_F} below. On the other hand, the non-degeneracy is sometimes not explicitly stated in those works, although often shown in the middle of the proof. For clarity, we will therefore quickly explain the proof of the theorem, following the approach of McLeod in~\cite{McLeod-93} and its summary in~\cite[App. B]{Tao-06} and~\cite{Frank-13}. 

The main properties of the function $F$ that make everything works are summarized in the following

\begin{lemma}[Elementary properties of $F$]\label{lem:prop_F}
Let $F$ be defined as in~\eqref{eq:def_F} on $\left(0,\frac{\pi}{2}\right)$, with $a>2b>0$. Then
\begin{enumerate}
\item $F$ is negative on $(0,\arcsin(\sqrt{b/a}))$ and positive on $(\arcsin(\sqrt{b/a}),\pi/2)$ with $F'(\arcsin(\sqrt{b/a}))>0$;
\item $x\mapsto xF'(x)/F(x)$ is decreasing on $(\arcsin(\sqrt{b/a}),\pi/2)$;
\item for every $\lambda>1$, the function 
\begin{equation}
I(x):=xF'(x)-\lambda F(x)
\label{eq:def_I}
\end{equation}
has exactly one root $x_*\in(\arcsin(\sqrt{b/a}),\pi/2)$, at which we have $I'(x_*)<0$.
\end{enumerate}
\end{lemma}

The above properties of $F$ are somehow inherited from the NLS case, since $F(x)=\cos(x)P(\sin(x))$ with $P(\xi)=a\xi^3-b\xi$. Below we will 
not use the property \textit{(2)}, but rather \textit{(3)} (which itself follows from \textit{(2)}). We however state \textit{(2)} since the monotonicity of $xF'(x)/F(x)$ appears in many works, including for instance~\cite{KwoZha-91} and~\cite{SerTan-00}. The proof of Lemma~\ref{lem:prop_F} will be provided at the end of the proof of the theorem. The `$I$' function~\eqref{eq:def_I} appears as well in~\cite{McLeod-93}, where an additional assumption on the behavior of $x_*$ was required.

Now, we assume that $a>2b>0$ and we look at the solutions $u_y$ of~\eqref{eqNLSsinusradial} with $u_y(0)=y$ and $u'(0)=0$, and we let $y$ vary in $(0,\pi/2)$. 
% It was proved in~\cite[Lem.~2.6]{EstRot-12} that a solution starting at $y\geq\pi/2$ stays $\geq\pi/2$ and hence cannot tend to $0$ at infinity. 
Note that the function $(y,r)\mapsto u_y(r)$ is smooth (indeed real-analytic since $F$ is analytic). Following~\cite{McLeod-93}, we introduce the sets 
$$S_+=\big\{y\in(0,\pi/2)\ :\ \min_{\R^+} u_y>0\big\},$$
$$S_0=\big\{y\in(0,\pi/2)\ :\ u_y>0\ \text{and}\ \lim_{r\to\ii}u_y(r)=0\big\},$$
$$S_-=\big\{y\in(0,\pi/2)\ :\ u_y(r_y)=0\ \text{for some (first) $r_y>0$}\big\},$$
which form a partition of $(0,\pi/2)$. As we have recalled above, since the energy $H$ is decreasing along a solution, we have $(0,\arcsin(\sqrt{2b/a}))\subset S_+$. This was actually shown in~\cite{EstRot-12}, where the solution $Q=u_{\bar y}$ is constructed by looking at the supremum of $S_+$. In particular, $S_0\neq\emptyset$. If $y\in S_0$ we let for convenience $r_y:=+\ii$. Since $(r,y)\mapsto u_y(r)$ is smooth, it can easily be proved that $S_-$ is open. The same holds for $S_+$, but the proof is more difficult. The idea is that the points of $S_-$ are characterized by the fact that the trajectory in phase space crosses first the horizontal axis (that is, $u_y$ vanishes before $u'_y$), whereas for $y\in S_+$ it only crosses the vertical axis ($u'_y$ vanishes and $u_y$ does not), see~Figure~\ref{fig:portrait}.

\begin{lemma}\label{lem:prop_S_-}
Let $y\in S_0\cup S_-$. Then $u_y'<0$ on $(0,r_y)$, that is, $u_y$ vanishes before $u'_y$. In particular, $u_y$ is strictly decreasing on $(0,r_y)$. 
\end{lemma}

\begin{proof}
The proof is again based on the monotonicity of the energy $H$ and it can for instance be read in~\cite[Lem. 3]{PelSer-83}. The idea is the following. We denote for simplicity $u=u_y$ and $u'=u'_y$. First, since 
$S_0\cup S_-\subset(\arcsin(\sqrt{2b/a},\pi/2)$, then we have from~\eqref{eqNLSsinusradial}
$u''(0)=-F(u(0))/d<0$
and hence $u'(r)<0$ for small $r>0$. On the other hand $u'(r_y)<0$ (since $r_y$ is the first root of $u=u_y$ and the latter cannot have double zeroes). Assuming that $u'$ changes sign before $r_y$ implies that $u$ must have a local strict minimum at some point $0<r'<r_y$, at which $u(r')>0$. Then, since $\lim_{r\to r_y}u(r)=0$, there must be another later point $r''<r_y$ at which $u(r'')=u(r')$. However, we have
$$\frac{u'(r'')^2}{2}=H(r'')-H(r')=\int_{r'}^{r''}H'(s)\,ds=-(d-1)\int_{r'}^{r''}\frac{u'(s)^2}{s}\,ds<0,$$
a contradiction.
\end{proof}

\begin{lemma}\label{lem:prop_S_+}
Let $y\in S_+$. Then $u_y'$ vanishes at least once and, for the first positive root $r'_y$ of $u_y'$, we have $H(r'_y)<0$. The set $S_+$ is open. 
\end{lemma}

\begin{proof}
The proof follows the presentation of~\cite{Frank-13} and it goes as follows.  We denote for simplicity $u=u_y$ and $u'=u'_y$. If $y=\arcsin(\sqrt{b/a})$, then $u\equiv \arcsin(\sqrt{b/a})$ and $H(r)<0$ for all $r\in \RR^+$. Hence, let $y\neq\arcsin(\sqrt{b/a})$. First we claim that $u'$ must vanish. Otherwise $u$ is decreasing whenever $y\in S_+\cap \left(\arcsin(\sqrt{b/a}), \frac{\pi}{2}\right)$ and increasing if $y\in S_+\cap \left(0,\arcsin(\sqrt{b/a})\right)$. In both cases, $u$ has a positive limit $0<\alpha<\frac{\pi}{2}$ at infinity. Using the equation, we see that $F(\alpha)=0$, hence $\alpha=\arcsin(\sqrt{b/a})$. Next, following~\cite{BerLioPel-81,Frank-13}, we look at $U:=r^{(d-1)/2}(u-\arcsin(\sqrt{b/a}))>0$ which solves the equation
$$U''=\left(\frac{(d-1)(d-3)}{4r^2}-\frac{F(u)}{u-\arcsin(\sqrt{b/a})}\right)U.$$
At infinity we have $F(u)(u-\arcsin(\sqrt{b/a}))^{-1}\to 2b(a-b)>0$, hence $U''(r)\sim_{r\to\ii} -2b(a-b)U(r)$, which easily leads to a contradiction. We conclude that $u'$ vanishes and we denote by $r'_y$ its first root. 

Next we distinguish two cases. First, if $y\leq \arcsin(\sqrt{b/a})$, then $H(0)<0$ and, by~\eqref{eq:energy_Hamiltonian}, $H(r)<0$ for all $r>0$ and in particular $H(r'_y)<0$. Second, if $y>\arcsin(\sqrt{b/a})$, then $u'$ is negative for small $r$ (due to the fact that $u''(0)=-F(y)/d<0$). Since $u''(r'_y)\neq0$ (otherwise $F(u(r'_y))=0$ and $u$ is constant), we see that $u$ must attain a local minimum at $r'_y$. From the equation~\eqref{eqNLSsinusradial}, this yields $F(u(r'_y))<0$ and hence $u(r_y')<\arcsin(\sqrt{b/a})$, which implies $H(r'_y)<0$. 

Finally we prove that $S_+$ is open. We already know that $S_+\supset (0,\arcsin(\sqrt{2b/a}))$. Let then $y\in S_+\cap (\arcsin(\sqrt{2b/a}),\pi/2)$. For $z$ in a neighborhood of $y$, $u_z$ possesses a local minimum at $r_z'$ at which $H(r'_z)<0$. Since $(a/4)\sin^2(u(r))(\sin^2(u(r))-2b/a)\leq H(r)<H(r'_z)<0$, we get $0<\epsilon\leq u(r)\leq \arcsin(\sqrt{2b/a})-\epsilon$ for all $r>r'_z$ and some $\epsilon>0$, and therefore $z\in S_+$.
\end{proof}

Let now $v_y$ be the unique solution to
\begin{equation}
\begin{cases}
\dps L(v):=v''+\frac{d-1}{r}v'+F'(u_y)v=0\\
v(0)=1\\
v'(0)=0.
\end{cases}
\label{eq:linearized2}
\end{equation}
The main remark is that $v_y=\partial_y u_y$ is the variation of $u$ with respect to the initial condition $u_y(0)=y$, which implies the following result

\begin{lemma}
Assume that $y\in S_0$ and that $v_y(r),v_y'(r)\to-\ii$ when $r\to+\ii$. Then there exists $\epsilon>0$ such that $(y-\epsilon,y)\subset S_+$ and $(y,y+\epsilon)\subset S_-$.
\end{lemma}

\begin{proof}
This is~\cite[Lem. 3(b)]{McLeod-93} and the argument goes as follows.
Choose first $\alpha>0$ such that $F'\leq -b/2$ on $[0,\alpha)$, and then $\bar R$ such that $u(r)\leq \alpha$ for all $r\geq \bar R$. Finally, choose $R\geq \bar R$ such that $v_y(R)<0$ and $v'_y(R)<0$. For $z\in(y,y+\epsilon)$, we then have $0<u_z(R)<u_y(R)$ and $u_z'(R)<u_y'(R)<0$. The function $w:=u_z-u_y$ is negative at $R$ with $w'(R)<0$. If $z\in S_0$ or if $z\in S_+$, then $w$ must tend to $0$ or become positive at some point, and therefore it must have a first local (strict) minimum at some point $R'>R$, with $w(R)\geq w(r)\geq w(R')$ for all $R\leq r\leq R'$. From the equation~\eqref{eqNLSsinusradial} we can then write
$$w''(R')=F(u_y(R'))-F(u_z(R'))=-F'(\theta)w(R'),$$
for some $0<u_z(R')< \theta< u_y(R')\leq \alpha$. Here $u_z(R')>0$ because of our assumption that $z\in S_0\cup S_+$ and $u_y(R')\leq \alpha$ by choice of $\alpha$.  Now $F'(\theta)\leq -b/2<0$ and $w(R')<0$, which is a contradiction. The argument is the same for $z<y$.
\end{proof}

The lemma implies that any $y\in S_0$ for which $v_y,v'_y$ diverges to $-\ii$ must be an isolated point. Now, if we can prove that $v_y,v'_y\to-\ii$ for all $y\in S_0$ then we would clearly be done. Indeed, we know that $S_+$ and $S_-$ are open and they can only be separated by points in $S_0$. But the lemma says that points in $S_0$ can only serve as a transition between $S_+$ below and $S_-$ above. Therefore, there can be only one such transition, and we conclude that $S_0$ is reduced to one point. So our goal will be to prove that all the points in $S_0$ have $v_y,v'_y\to-\ii$.

Our argument will be based on the Wronskian identity
\begin{equation}
\big(r^{d-1}(v_yf'-fv_y')\big)'=r^{d-1}v_yL(f)
\label{eq:Wronskian}
\end{equation}
for various functions $f$'s. A simple calculation shows that
\begin{equation}
L(u_y)=u_yF'(u_y)-F(u_y),
\label{eq:L_u}
\end{equation}
\begin{equation}
L(ru_y')=-2F(u_y),
\label{eq:L_ru}
\end{equation}
and
\begin{equation}
L(u_y')=\frac{d-1}{r^2}u_y'.
\label{eq:L_u'}
\end{equation}
These three test functions correspond respectively to variations of $u_y$ using multiplication by a constant, dilations and translations.

\begin{lemma}\label{lem:v_has_one_root}
For every $y\in S_0$, the function $v_y$ vanishes exactly once.
\end{lemma}

\begin{proof}
For simplicity we denote again $u=u_y$ and $v=v_y$. Assume on the contrary that $v(r)>0$ for all $r\geq 0$ (if $v$ does not vanish it must be strictly positive since it cannot have double zeroes). Using~\eqref{eq:Wronskian} with $f=u'$, we find
$$\big(r^{d-1}(vu''-u'v')\big)'=(d-1)r^{d-3}v(r)u'(r)<0$$
and, therefore, $r^{d-1}(vu''-u'v')=r^{d-1}v^2(u'/v)'$ is decreasing and vanishes at $r=0$, hence $(u'/v)'<0$. Since $u'(0)/v(0)=0$, we conclude that $u'/v\leq -\epsilon$ for $r\geq1$ and thus $0\leq v\leq -u'/\epsilon$. As we have said $r^{d-1}(vu''-u'v')$ vanishes at $r=0$ and it is decreasing, hence $r^{d-1}(vu''-u'v')\leq -\epsilon$ for $r\geq1$. However $r^{d-1}|vu''|\leq C r^{d-1}|u'(r)|\,|u''(r)|$ decays exponentially at infinity and hence $r^{d-1}u'v'\geq \epsilon/2$ for $r$ large enough. Using~\eqref{eq:exponential_decay}, this proves that $-\sqrt{b}v'\geq C e^{\sqrt{b}r}r^{-(d-1)/2}$. Therefore $v'$ diverges to $-\ii$ exponentially at infinity, which contradicts the assumption that $v>0$. 

Next, the proof that $v$ can only vanish once is the same as in~\cite[p. 357--358]{Tao-06}. Indeed, start with $z=\arcsin(\sqrt{b/a})$ at which the solution $u_{z}$ is stationary. The function $u_{y}-u_{z}=u_y-\arcsin(\sqrt{b/a})$ vanishes exactly once since $u_{y}$ decreases from $y>\arcsin(\sqrt{b/a})=z$ to 0. Taking $z\to y$ and using that $u_y-u_{z}$ cannot have double zeroes gives that $v$ can vanish at most once.
\end{proof}

We are now able to show that $v$ and $v'$ diverge to $-\ii$. 

\begin{lemma}
Let $y\in S_0$. Then $v_y(r)$ and $v'_y(r)$ diverge to $-\ii$ as $r\to\ii$.
\end{lemma}

\begin{proof}
For simplicity we denote again $u=u_y$ and $v=v_y$. 
Let $r_*$ be the unique root of $v$, at which we must have $v'(r_*)<0$. Let now $c:=-u(r_*)/(r_*u'(r_*))>0$, which is chosen such that $f:=u+cru'$ vanishes at the zero $r_*$ of $v$. Recall that $u'(r)<0$ and $u(r)>0$ for all $r>0$, by Lemma~\ref{lem:prop_S_-}. Then we have from~\eqref{eq:Wronskian}
\begin{equation}
\big(r^{d-1}(f'v-v'f)\big)'=r^{d-1}v\Big(uF'(u)-(1+2c)F(u)\Big).
\label{eq:calcul_Wronskian}
\end{equation}
Next we remark that the function $r^{d-1}(f'v-v'f)$ vanishes both at $r=0$ and at $r=r_*$. Therefore, its derivative must vanish at least once on $(0,r_*)$, that is, $uF'(u)-(1+2c)F(u)$ vanishes before $r_*$. Since $u$ is strictly decreasing, and since $y\mapsto yF'(y)-(1+2c)F(y)$ vanishes only once by Lemma~\ref{lem:prop_F}, we conclude that $\big(r^{d-1}(f'v-v'f)\big)'$ is negative for $r>r_*$, hence $r^{d-1}(f'v-v'f)$ is strictly decreasing after $r_*$. In particular, 
$$r^{d-1}(v'f-vf')\geq \epsilon>0,\qquad \forall r>2r_*.$$
Since $f=u+cru'$ and $f'$ go to 0 exponentially at infinity, we conclude that $(v,v')$ must diverge. More precisely, we have for $r$ large enough
$$f=u(1+cru'/u)\underset{r\to\ii}{\sim}-C\sqrt{b}\,r^{(3-d)/2}e^{-\sqrt{b}r}$$
since $u'/u\to -\sqrt{b}$ and by~\eqref{eq:exponential_decay}. Hence
$$\left(\frac{v}{f}\right)'\geq \frac{\epsilon}{r^{d-1}f^2}\geq Cr^{-2}e^{2r\sqrt{b}}$$
and after integrating we get $v\leq -Ce^{r(\sqrt{b}-\epsilon)}$. As a consequence, $v$ diverge to $-\ii$ exponentially.

Finally using that $(r^{d-1}v')'=-r^{d-1}F'(u)v\leq (b/2)r^{d-1}v$ for large $r$ (since $F'\to- b$), we conclude that $v'$ diverges to $-\ii$ exponentially as well.
\end{proof}

As we have explained, the fact that all the points $y\in S_0$ are non-degenerate with $v_y,v'_y\to-\ii$ implies uniqueness, and concludes the proof of Theorem~\ref{thm:radial}.
\end{proof}

\bigskip

\begin{proof}[Proof of Lemma~\ref{lem:prop_F}]
Let $P(\xi)=a\xi^3-b\xi$ be the NLS polynomial, which is such that $F(x)=\cos(x)\,P(\sin(x))$. We have
\begin{equation}
\frac{\xi P'(\xi)}{P(\xi)}=3+\frac{2b}{a\xi^2-b}
\label{eq:calcul_P}
\end{equation}
which is positive decreasing on $(\sqrt{b/a},1)$. Noticing that
$$\frac{xF'(x)}{F(x)}=x\frac{\cos(x)}{\sin(x)}\left(\frac{\sin(x)\,P'(\sin(x))}{P(\sin(x))}\right)-\frac{x\sin(x)}{\cos(x)},$$
we find
\begin{align*}
\left(\frac{xF'(x)}{F(x)}\right)'&=\frac{\sin(2x)-2x}{2\sin^2(x)}\left(\frac{\sin(x)\,P'(\sin(x))}{P(\sin(x))}\right)\\
&\qquad\qquad+x\frac{\cos(x)}{\sin(x)}\left(\frac{\sin(x)\,P'(\sin(x))}{P(\sin(x))}\right)'-\frac{\sin(2x)+2x}{2\cos^2(x)}\\
&=\frac{\sin(2x)-2x}{2\sin^2(x)}\left(3+\frac{2b}{a\sin^2(x)-b}\right)\\
&\qquad\qquad-4abx\frac{\cos^2(x)}{(a\sin^2(x)-b)^2}-\frac{\sin(2x)+2x}{2\cos^2(x)}.
\end{align*}
This is negative for $\arcsin(\sqrt{b/a})<x<\pi/2$.

Let now $\lambda>1$, and consider the function $I$ in~\eqref{eq:def_I}. Note that $F'(0)=-b$, and hence $xF'(x)-\lambda F(x)=(\lambda-1)bx+o(x)$ is positive for small $x>0$. On the other hand, 
$$(\pi/2)F'(\pi/2)-F(\pi/2)=\pi(b-a)/2<0,$$
hence $I$ must vanish at least once on the interval $(0,\pi/2)$. Next we remark that 
\begin{align*}
I(x)&=a\left(x\cos(2x)-\frac{\lambda}{2}\sin(2x)\right)\left(\sin^2(x)-\frac{b}{a}\right)+\frac{a}{2}x\sin^2(2x)\nn\\
&=\frac{a}2\sin(2x)\left[ \left(\frac{2x\cos(2x)}{\sin(2x)}-\lambda\right)\left(\sin^2(x)-\frac{b}{a}\right)+x\sin(2x)\right].
\label{eq:calcul_vanish_once}
\end{align*}
Note that, for $0<x<\pi/2$, $\sin(2x)>0$  and
$$\frac{2x\cos(2x)}{\sin(2x)}-\lambda\leq 1-\lambda<0.$$
From this we conclude that $I(x)>0$ when $0<x\leq\arcsin(\sqrt{b/a}),$ hence $I$ can only vanish on $(\arcsin(\sqrt{b/a}),\pi/2)$. On this interval $xF'(x)/F(x)$ is strictly decreasing, as we have shown before, hence $I$ can only have one root.
\end{proof}

%%%%%%%%%%%%%%%%%%%%%%%%%%%%%%%%%%%%%%%%%%%%%%
\subsection{Non-degeneracy in $L^2(\R^d)$}\label{sec:nondegeneracy_L2}
%%%%%%%%%%%%%%%%%%%%%%%%%%%%%%%%%%%%%%%%%%%%%%

The linearized operators at our solution $\phi=\sin(Q)$ are defined by
\begin{equation}
L_1(\eta)=-\nabla\cdot \left(\frac{\nabla\eta}{1-\phi^2}\right)+\bigg\{-2\nabla\cdot\left(\frac{\phi\nabla\phi}{(1-\phi^2)^2}\right)+4\frac{\phi^2(\phi')^2}{(1-\phi^2)^3}+\frac{(\phi')^2}{(1-\phi^2)^2}-3a\phi^2+b\bigg\}\eta
\label{eq:def_L_1}
\end{equation}
and 
\begin{equation}
L_2(\eta)=-\nabla\cdot \left(\frac{\nabla\eta}{1-\phi^2}\right)+\bigg\{\frac{(\phi')^2}{(1-\phi^2)^2}-a\phi^2+b\bigg\}\eta.
\label{eq:def_L_2}
\end{equation}
More precisely, the linearized operator is $\cL(\eta_1+i\eta_2)=L_1\eta_1+iL_2\eta_2$. The operator $L_1$ describes variations with respect to $\phi$ for real functions, whereas $L_2$ is related to the invariance of our problem under multiplication by a phase factor. It is easy to verify that both $L_1$ and $L_2$ are self-adjoint operators on $L^2(\R^d)$, with domain $H^2(\R^d)$ and form domain $H^1(\R^d)$. The main result of this section is 

\begin{theorem}[Non-degeneracy of the unique ground state $\phi$]\label{thm:non-degenerate}
In $L^2(\R^d)$, we have $\ker(L_1)={\rm span}(\partial_{x_1}\phi,...,\partial_{x_d}\phi)$ and $\ker(L_2)={\rm span}(\phi)$.
\end{theorem}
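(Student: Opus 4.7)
My plan is to handle $L_2$ and $L_1$ separately, using different techniques appropriate to the symmetries generating their kernels.

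\smallskip

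\noindent\textbf{Kernel of $L_2$.} A direct substitution of equation~\eqref{eq:nonlinear_nospin} into formula~\eqref{eq:def_L_2} shows that $L_2\phi=0$. The operator $L_2$ is uniformly elliptic and self-adjoint on $L^2(\R^d)$, with leading coefficient $1/(1-\phi^2)$ bounded above and below by positive constants and with bounded potential. Since $\phi>0$ is an $L^2$ zero mode, the Perron-Frobenius / ground state principle for Schrödinger-type operators yields that $0$ is a simple eigenvalue of $L_2$ and $\ker L_2=\mathrm{span}(\phi)$. Equivalently, one may argue directly by factorization: writing any $\eta\in\ker L_2$ as $\eta=\phi f$ and integrating by parts using $L_2\phi=0$ produces
\begin{equation*}
0=\langle\eta,L_2\eta\rangle=\int_{\R^d}\frac{\phi^2}{1-\phi^2}|\nabla f|^2,
\end{equation*}
forcing $f$ to be constant and hence $\eta\in\mathrm{span}(\phi)$. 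A cutoff argument handles the integration by parts at infinity, using the exponential decay of both $\phi$ and $\eta$ (the latter by elliptic regularity).

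\smallskip

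\noindent\textbf{Kernel of $L_1$.} The first step is to conjugate $L_1$ to a standard Schrödinger operator. In the variable $u=\arcsin(\phi)$, the energy $\cE_a+(b/2)\|\cdot\|_{L^2}^2$ becomes $\tfrac12\int|\nabla u|^2+\int V(u)$ with $V'=-F$, whose Hessian at the critical point $Q=\arcsin(\phi)$ is the Schrödinger operator $-\Delta-F'(Q)$. The chain rule for a second derivative at a critical point produces the intertwining identity
\begin{equation*}
\langle\eta,L_1\eta\rangle=\langle w,(-\Delta-F'(Q))w\rangle,\qquad\eta=\sqrt{1-\phi^2}\,w.
\end{equation*}
Because $\phi\le\sin\bar y<1$, multiplication by $\sqrt{1-\phi^2}$ is an isomorphism of $L^2(\R^d)$ (and of $H^1$ and $H^2$), so determining $\ker L_1$ reduces to determining the $L^2$-kernel of $-\Delta-F'(Q)$, with the correspondence $\partial_{x_j}\phi\leftrightarrow\partial_{x_j}Q$.

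\smallskip

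\noindent\textbf{Angular decomposition.} Decomposing $w\in L^2(\R^d)$ into spherical harmonics (or even/odd parts if $d=1$), the radial part $w_\ell$ in the sector of angular quantum number $\ell\ge 0$ must satisfy
\begin{equation*}
-w_\ell''-\frac{d-1}{r}w_\ell'+\frac{\ell(\ell+d-2)}{r^2}w_\ell-F'(Q)w_\ell=0.
\end{equation*}
For $\ell=0$, regularity at the origin forces $w_0\propto v$ with $v$ the function of Theorem~\ref{thm:radial}, which diverges exponentially, so the radial kernel is trivial. For $\ell=1$, the $d$ translation derivatives $\partial_{x_j}Q=Q'(r)x_j/|x|$ supply one kernel element per spherical harmonic mode, and uniqueness within each mode follows from Perron-Frobenius: since $-Q'>0$ on $(0,\infty)$ is a strictly positive eigenfunction with eigenvalue $0$ of the radial $\ell=1$ operator $H_1$, its kernel is one-dimensional. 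For $\ell\ge 2$, the positive centrifugal shift $[\ell(\ell+d-2)-(d-1)]/r^2>0$ combined with $H_1\ge 0$ makes $H_\ell$ strictly positive, so $\ker H_\ell=\{0\}$. The principal obstacle is the $\ell=0$ analysis, which relies essentially on the exponential divergence of $v$ established in Theorem~\ref{thm:radial}; with that in hand, the remaining sectors yield to routine Perron-Frobenius and centrifugal-barrier arguments.
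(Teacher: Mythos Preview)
Your proof is correct and follows essentially the same route as the paper: Perron--Frobenius for $L_2$, then conjugation of $L_1$ to $-\Delta-F'(Q)$ via multiplication by $\sqrt{1-\phi^2}=\cos Q$, followed by the angular decomposition with the $\ell=0$ sector handled by Theorem~\ref{thm:radial}, the $\ell=1$ sector by Perron--Frobenius applied to $-Q'>0$, and $\ell\ge 2$ by the centrifugal comparison $A^{(\ell)}>A^{(1)}$. The only cosmetic differences are that you derive the intertwining identity conceptually (as the chain rule for the Hessian of the energy at a critical point under the change of variable $u=\arcsin\phi$) where the paper simply states it as a direct computation, and you phrase it at the level of quadratic forms rather than as the pointwise operator identity $L_1(\eta)=-(\Delta v+F'(Q)v)/\cos Q$ with $v=\eta/\cos Q$; both formulations yield the same kernel correspondence.
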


\begin{proof}
The operators $L_1$ and $L_2$ both satisfy the Perron-Frobenius property that their first eigenvalue, when it exists, is necessarily non-degenerate with a positive eigenfunction. This follows for instance from the fact that $\pscal{\eta,L_{1/2}\eta}\geq \pscal{|\eta|,L_{1/2}|\eta|}$ and from Harnack's inequality~\cite[Sec. 6.4, Thm 5]{Evans} which gives the strict positivity of eigenfunctions. Since $L_2\phi=0$ and $\phi$ is positive, we deduce that it must be the first eigenfunction of $L_2$, and that it is non-degenerate. Thus $\ker(L_2)={\rm span}(\phi)$. Next, in dimension $d=1$, we know that $\partial_x \varphi\in \ker(L_1)$ and $\partial_x \varphi$ has a constant sign. Hence, $0$ is the first eigenvalue of $L_1$ and it is non-degenerate which implies $\ker(L_1)={\rm span}(\partial_x \varphi)$.

The argument for $L_1$ in dimension $d\ge 2$ is slightly more complicated. A lengthy but straightforward computation shows that
$$L_1(\eta)=-\frac{\Delta v+F'(Q)v}{\cos(Q)},\qquad \text{with } v=\frac{\eta}{\cos(Q)}.$$
Since $0<Q\leq Q(0)<\pi/2$, the multiplier $\cos(Q)$ is bounded away from $0$ and we deduce that $v\in L^2(\R^d)$ if and only if $\eta\in L^2(\R^d)$. Hence $\eta\in \ker(L_1)$ if and only if $v=\eta/ \cos(Q)\in \ker(\Delta +F'(Q))$. The argument is now classical. The operator $-\Delta -F'(Q)$ commutes with space rotations and it may be written as a direct sum 
$$-\Delta-F'(Q)=\bigoplus_{\ell\geq0}A^{(\ell)}\otimes \1$$
corresponding to the decomposition
$$L^2(\R^d)=\bigoplus_{\ell\geq0}L^2(\R_+,r^{d-1}\,dr)\otimes \cK_\ell$$ 
with $\cK_\ell=\ker\big(\Delta_{|S^{d-1}}+\ell(\ell+d-2)\big)$ the $\ell$th  eigenspace of the Laplace-Beltrami operator on the sphere $S^{d-1}$. In dimension $d=3$, $\cK_\ell={\rm span}\{Y_m^{(\ell)},m=-\ell,...,\ell\}$ where $Y_m^{(\ell)}$ are the usual spherical harmonics. The formula for $A^{(\ell)}$ is 
$$A^{(\ell)}v:=-v''-\frac{(d-1)}{r}v'+\frac{\ell(\ell+d-2)}{r^2}v-F'(Q(r))v $$
with an appropriate boundary condition at $r=0$ (Neumann for $\ell=0$ and Dirichlet for $\ell\geq1$). Each $A^{(\ell)}$ has the Perron-Frobenius property. Since $Q'\in \ker(A^{(1)})$ and $Q'$ has a constant sign, we conclude that $0$ is the first eigenvalue of $A^{(1)}$ and it is non-degenerate, thus $\ker(A^{(1)})={\rm span}(Q')$. Next, for $\ell\geq2$, we simply use that $A^{(\ell)}>A^{(1)}$ in the sense of quadratic forms, which shows that the first eigenvalue of $A^{(\ell)}$ must be positive and hence $\ker(A^{(\ell)})=\{0\}$. Finally, for $\ell=0$, the operator $A^{(0)}$ was studied in Theorem~\ref{thm:radial}, where we proved that the unique solution to $A^{(0)}v=0$ with $v'(0)=0$ diverges exponentially at infinity, hence cannot be in $L^2(\R_+,r^{d-1}\,dr)$. We have therefore shown that 
$$\ker(\Delta+F'(Q))={\rm span}\{\partial_{x_1}Q,...,\partial_{x_d}Q\}.$$
Since $\partial_{x_k}\phi=\partial_{x_k}\sin(Q)=\cos(Q)\partial_{x_k}Q$, this says that 
$$\ker(L_1)={\rm span}\{\partial_{x_1}\phi,...,\partial_{x_d}\phi\}$$
which concludes our proof of Theorem~\ref{thm:non-degenerate}.
\end{proof}

%%%%%%%%%%%%%%%%%%%%%%%%%%%%%%%%%%%%%%%%%%%%%%
%%%%%%%%%%%%%%%%%%%%%%%%%%%%%%%%%%%%%%%%%%%%%%
\section{Proof of Theorem~\ref{thm:Dirac}}\label{sec:relativistic_limit}
%%%%%%%%%%%%%%%%%%%%%%%%%%%%%%%%%%%%%%%%%%%%%%
%%%%%%%%%%%%%%%%%%%%%%%%%%%%%%%%%%%%%%%%%%%%%%

We want to prove the existence of a branch of solutions to the Dirac equation
\begin{equation}
\begin{cases}
-i\alp\cdot\nabla \Psi+\beta(m+S)\Psi+V\Psi=(m-\mu)\Psi,\\
(-\Delta+m_\sigma^2)S=-g_\sigma^2\Psi^*\beta\Psi,\\
(-\Delta+m_\omega^2)V=g_\omega^2|\Psi|^2,\\
\end{cases}
\label{eq:Dirac2}
\end{equation}
which can be rewritten for $\Psi=(\psi,\zeta)$ as
\begin{equation}
\label{eqnrlphichi3d}
\begin{cases}
-i\bm{\sigma}\cdot\nabla \zeta +(S+V+\mu)\psi=0,\\
-i\bm{\sigma}\cdot\nabla \psi =(2m-\mu+S-V)\zeta,\\
(-\Delta+m_\sigma^2)S=-g_\sigma^2(|\psi|^2-|\zeta|^2),\\
(-\Delta+m_\omega^2)V=g_\omega^2(|\psi|^2+|\zeta|^2).
\end{cases}
\end{equation}
Here the parameters are chosen as 
\begin{equation}
m_\sigma^2=Cm^2,\quad m^2_\omega-m_\sigma^2=D,\quad \left(\frac{g_\sigma}{m_{\sigma}}\right)^2=\theta m,\quad \left(\frac{g_\sigma}{m_{\sigma}}\right)^2-\left(\frac{g_\omega}{m_{\omega}}\right)^2=\lambda
\label{eq:NR-limit-parameters2}
\end{equation}
with $C,D,\theta,\lambda,\mu>0$ fixed such that $\lambda>2\theta\mu$. It will be convenient to introduce 
%$$C=c^2>0\qquad\text{ and }\qquad D=d^2-c^2\in\R$$
%and 
the new fields
\begin{equation}
	\label{changepot}
	\tilde W_+=\frac{S+V}{2}\ \text{and}\ \tilde W_-=\frac{S-V}{2}
\end{equation}
Then, imposing the special form
\begin{equation}
\psi(x)=\tilde \varphi(|x|)\vct{1}{0},\qquad \zeta(x)=-i\tilde \chi(|x|)\;\bm\sigma\cdot\frac{x}{|x|}\vct{1}{0},
\label{eq:form_Dirac2}
\end{equation}
with real-valued functions $\tilde \phi$ and $\tilde \zeta$, and using \eqref{eq:NR-limit-parameters2} and \eqref{changepot}, we obtain the following system
\begin{equation}
	\label{systemW+W-3d}
	\left\{
	\begin{aligned}
		&\tilde \phi' - (2m + 2\tilde W_--\mu)\tilde \chi=0\\
		&\tilde \chi'+\frac{2}{r}\tilde \chi-(2 \tilde W_++\mu) \tilde \phi=0\\
		&\tilde W_+ = \frac{1}{2}\left(\frac{1}{m^2_{\sigma}}+\frac{1}{m_{\omega}^2}\right)\Delta \tilde W_+ + \frac{1}{2}\left(\frac{1}{m^2_{\sigma}}-\frac{1}{m_{\omega}^2}\right)\Delta \tilde W_--\frac{\lambda}{2}(\tilde \phi^2+\tilde\chi^2)+\theta m\tilde \chi^2\\
		&\tilde W_- = \frac{1}{2}\left(\frac{1}{m^2_{\sigma}}-\frac{1}{m_{\omega}^2}\right)\Delta \tilde W_+ + \frac{1}{2}\left(\frac{1}{m^2_{\sigma}}+\frac{1}{m_{\omega}^2}\right)\Delta \tilde W_- +\frac{\lambda}{2}(\tilde \phi^2+\tilde \chi^2)-\theta m\tilde \phi^2
	\end{aligned}
	\right.
\end{equation}
which is equivalent to \eqref{eq:Dirac2} for functions of the above form~\eqref{eq:form_Dirac2}.

Next, we consider the following rescaling 
\begin{equation}
	\label{rescaling}
	\begin{aligned}
&\tilde \varphi(x)=\frac{1}{\sqrt{\theta}}\varphi (\sqrt{m}x),\qquad\tilde \chi(x)=\frac{1}{2\sqrt{\theta}}\frac{1}{\sqrt{m}} \chi(\sqrt{m}x),\\
&\tilde W_+(x)=W_+(\sqrt{m}x),\qquad \tilde W_-(x)=m W_-(\sqrt{m}x),
	\end{aligned}
\end{equation}
and we find
\begin{equation*}
	\left\{
	\begin{aligned}
		&  \varphi'-\left(1 +  W_-- \frac{\mu}{2 m}\right)  \chi=0\\
		&\chi'+\frac{2}{r}\chi-\left(4  W_+ + 2\mu\right) \varphi=0\\
		& W_+ = \frac{2+D/(Cm^2)}{2m(C+D/m^2)}\Delta  W_+ + \frac{D}{2C(Cm^2+D)}\Delta  W_--\frac{\lambda}{2}\left(\frac{\varphi^2}{\theta}+\frac{\chi^2}{4\theta m}\right)+\frac{\chi^2}{4}\\
		& W_- = \frac{D}{2Cm^2(Cm^2+D)} \Delta  W_+ +\frac{2+D/(Cm^2)}{2m(C+D/m^2)} \Delta  W_- +\frac{\lambda}{2 m}\left(\frac{ \varphi^2}{\theta}+\frac{\chi^2}{4\theta m}\right)- \varphi^2
	\end{aligned}
	\right.
\end{equation*}
Finally, denoting $\epsilon=1/m$ the perturbative parameter and recalling that 
$$a=2\lambda/\theta,\qquad b=2\mu,$$
we obtain
\begin{equation}
	\label{perturbedsystem3d}
	\left\{
	\begin{aligned}
		&\phi' -\left(1 +   W_-- \epsilon\frac{b}{4}\right) \chi=0\\
		&\chi'+\frac{2}{r}\chi-\left(4  W_++b\right) \varphi=0\\
		&\big(-\epsilon\mathcal{R}(\epsilon)\Delta+\mathds{1}_2\big)\vct{ W_+}{ W_-}+\mathcal{F}(\phi,\chi)+\mathcal{H}(\epsilon,\phi,\chi)=0
	\end{aligned}
	\right.
\end{equation}
with 
$$\cR(\epsilon)=\frac{1}{2(C+D\epsilon^2)}\left(\begin{array}{cc}2+\epsilon^2 D/C & \epsilon D/C\\
	\epsilon^3D/C & 2+\epsilon^2D/C 
	\end{array}\right),$$
$$\mathcal{F(\phi,\chi)=\vct{a\phi^2/4-\chi^2/4}{\phi^2}},\qquad \mathcal{H}(\epsilon,\phi,\chi)=\epsilon\frac{a}{4}\vct{\chi^2/4}{-\phi^2-\epsilon\chi^2/4}.$$
When $\epsilon=0$, we obtain the system of equations
\begin{equation}
\begin{cases}
\varphi'=\chi(1-\varphi^2)\\
\chi'+\frac{2}{r}\chi=\varphi(\chi^2-a \varphi^2+b)
\end{cases}
\label{eq:NLS_system_radial}
\end{equation}
which is equivalent to~\eqref{eqNLSsinusradial} with $\phi=\sin(u)$ and was studied in~\cite{EstRot-12,EstRot-13}.

We introduce the map
$\cK:\RR\times \cH^2_{\rm rad} \times (H^{2}_{\mathrm{rad}})^2\longrightarrow \cH^1_{\rm rad}\times (H^{2}_{\mathrm{rad}})^2$ defined by
\begin{multline}
	\label{opkrad}
	\cK(\epsilon,\varphi,\chi,W_+,W_-)=\\
	\begin{pmatrix}
		\varphi' - (1 + W_-- \epsilon b/4)\chi\\
		\chi'+2\chi/r -(4 W_++b) \varphi\\
		\begin{pmatrix}W_+\\ W_-\end{pmatrix}+\frac{\dps1}{\dps\epsilon\cR(\epsilon)(-\Delta)+\1_2}\Big(\mathcal{F}(\varphi,\chi)+\mathcal{H}(\epsilon,\varphi,\chi)\Big)
	\end{pmatrix}.
\end{multline}
Here the spaces
$$\cH^k_{\rm rad}:=\left\{(\phi,\chi)\ :\ \vct{\phi(|x|)\vct{1}{0}}{-i\chi(|x|)\,\bm\sigma\cdot\frac{x}{|x|}\vct{1}{0}}\in H^k(\R^3,\C^4)\right\}$$
are the projections of the usual Sobolev spaces $H^k(\R^d,\C^4)$ to the sector of minimal total angular momentum (they in particular contain a boundary condition at $r=0$), whereas $H^2_{\rm rad}$ is the usual projection of $H^2(\R^3,\R)$ to the subspace of radial functions. 

In what follows, we let $\X=\cH^2_{\rm rad} \times (H^{2}_{\mathrm{rad}})^2$, $\Y= \cH^1_{\rm rad}\times (H^{2}_{\mathrm{rad}})^2$ and $\Xi=(\phi,\chi,W_+,W_-)$. Solving the system \eqref{perturbedsystem3d} is equivalent to solving $\cK(\epsilon,\Xi)=0$. We construct a branch of solutions, by means of an implicit function-type argument. 
The first step is to prove that $\cK$ is a smooth operator from $\RR\times \X$ into $\Y$.

% \begin{lem}
% \label{Kradregularity}
% For $\eta$ small enough, the operator $\cK:(-\eta,\eta)\times \X \to \Y$ defined as in \eqref{opkrad} is continuous. Its derivative $\partial_\Xi\cK:(-\eta,\eta)\times \X \to \Y$ is also continuous.
% \end{lem}

\begin{lem}
\label{Kradregularity}
For $\eta$ small enough, the operator $\cK:[0,\eta)\times \X \to \Y$ defined as in \eqref{opkrad} is continuous. Its derivative $\partial_\Xi\cK:[0,\eta)\times \X \to \Y$ is also continuous.
\end{lem}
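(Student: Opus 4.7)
The plan is to examine the three rows of $\cK$ separately, using the Banach algebra properties of $H^2(\R^3)$ for the multiplicative nonlinearities and a Fourier-multiplier analysis for the resolvent that appears in the third row.

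The spaces $\cH^k_{\rm rad}$ are defined as the image of $H^k(\R^3,\C^4)$ in the angular sector \eqref{eq:form_Dirac2}, so the Dirac operator $\bm\sigma\cdot\nabla$ acts on that sector as the radial maps $\varphi\mapsto\varphi'$ and $\chi\mapsto\chi'+2\chi/r$, both bounded from $\cH^2_{\rm rad}$ to $\cH^1_{\rm rad}$. In particular the singular coefficient $2/r$ in the second row is automatically controlled by the spinor structure encoded in $\cH^k_{\rm rad}$. In dimension $3$, the embedding $H^2\hookrightarrow L^\infty$ makes $H^2$ a Banach algebra with $H^1\cdot H^2\hookrightarrow H^1$ continuously, which immediately yields smooth (polynomial) dependence of the multiplicative terms $W_-\chi$ and $W_+\varphi$ in the first two rows, valued in $\cH^1_{\rm rad}$, as well as of the polynomial $\mathcal{F}(\varphi,\chi)+\mathcal{H}(\epsilon,\varphi,\chi)$, valued in $(H^2_{\rm rad})^2$.

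For the third row, I would first check that $\cR(\epsilon)$ depends continuously on $\epsilon$ with $\cR(0)=(1/C)\1_2>0$, so that for $\eta$ small enough $\cR(\epsilon)$ remains positive definite on $[0,\eta)$. Consequently the matrix symbol $\epsilon\cR(\epsilon)|\xi|^2+\1_2$ dominates $\1_2$ uniformly in $\epsilon\in[0,\eta)$ and $\xi\in\R^3$, and the inverse Fourier multiplier $(\epsilon\cR(\epsilon)(-\Delta)+\1_2)^{-1}$ is bounded by $1$ on every $H^s$ uniformly in $\epsilon$. Applied to the $(H^2_{\rm rad})^2$-valued polynomial $\mathcal{F}+\mathcal{H}$, it produces a value in $(H^2_{\rm rad})^2$, and joint continuity in $(\epsilon,\Xi)$ follows from norm continuity of $\cR(\epsilon)$ combined with dominated convergence at the symbol level.

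For $\partial_\Xi\cK$ I would proceed by row-wise linearization: the first two rows are affine in $\Xi$ with polynomial coefficients, so the same Banach-algebra estimates deliver bounded and continuous Fr\'echet derivatives; the derivative of $\mathcal{F}+\mathcal{H}$ is linear in the tangent direction with coefficients polynomial in $\Xi$, and precomposition with the uniformly bounded resolvent preserves these properties. The main delicate point I anticipate is the uniformity of the resolvent bound as $\epsilon\to 0$, where the highest-order term $\epsilon\cR(\epsilon)(-\Delta)$ vanishes; this is circumvented by using the $L^2$- and $H^s$-operator norm bound of $1$ coming from functional calculus, rather than any $H^2$-smoothing that would degenerate at the endpoint.
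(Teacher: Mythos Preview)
Your outline is correct and matches the paper's approach closely: Banach-algebra estimates in $H^2(\R^3)$ for the polynomial terms, the Dirac structure of $\cH^k_{\rm rad}$ to absorb the $2/r$ singularity, and a uniform Fourier-multiplier bound on $(\epsilon\cR(\epsilon)(-\Delta)+\1_2)^{-1}$ together with dominated convergence for continuity in $\epsilon$. One minor caveat: $\cR(\epsilon)$ is not symmetric (its off-diagonal entries are $\epsilon D/C$ and $\epsilon^3 D/C$), so your ``positive definite'' should be read as positivity of the symmetric part --- this still yields $\|(\epsilon\cR(\epsilon)|\xi|^2+\1_2)^{-1}\|\leq 1$ for $\epsilon$ small, whereas the paper instead computes the two eigenvalues $1+2Cx$ and $1+2(C+\epsilon^2 D)x$ of the symbol explicitly to obtain the bound for all $\epsilon\geq 0$.
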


Note that we do note prove the continuity of the derivative $\partial_\epsilon\cK$, which fails at $\epsilon=0$. Fortunately, the latter is not needed for the implicit function theorem (see e.g.\cite[Thm.~3.4.10]{KraPar-12}).

\begin{proof} 
The proof is tedious but elementary. It relies on the fact that $H^2(\R^3)$ is an algebra and that all the functions appearing in the definition of $\cK$ are polynomials in the unknowns $(\phi,\chi,W_+,W_-)$. Also, it uses that 
$$\vct{\phi}{\chi}\mapsto \vct{\phi'-\chi}{\chi'-2\chi/r-b\phi}$$
is an isomorphism from $\cH^2_{\rm rad}$ to $\cH^1_{\rm rad}$. Indeed, this map is related to the restriction of the Dirac operator
$$-i\alp\cdot\nabla+\frac{b+1}{2}\beta+\frac{b-1}{2}$$
to functions of the form~\eqref{eq:form_Dirac2}. Since $|b-1|/2<(b+1)/2$, the operator is an isomorphism from $H^2(\R^3)$ to $H^1(\R^3)$ and the same holds in the radial subspaces $\cH^2_{\rm rad}$ and $\cH^1_{\rm rad}$. Similarly, the operator
$$\Phi\mapsto \frac{\dps1}{\dps\epsilon\cR(\epsilon)(-\Delta)+\1_2}\Phi$$
is the Fourier multiplier with the matrix $(\epsilon\cR(\epsilon)|k|^2+\1_2)^{-1}$ and we claim that
\begin{equation}
\norm{\big(\epsilon\cR(\epsilon)|k|^2+\1_2\big)^{-1}}\leq 1
%\frac{c}{1+\epsilon|k|^2}
\label{eq:estim_R}
\end{equation}
for all $k\in\R^3$ and all $\epsilon\ge 0$. 
%in a neighborhood of 0. 
This estimate shows that the corresponding map is bounded on $H^2(\R^3)$, as needed. In order to prove~\eqref{eq:estim_R}, we recall that
$$\epsilon\cR(\epsilon)|k|^2+\1_2=\frac{\epsilon|k|^2}{2C(C+D\epsilon^2)}\begin{pmatrix}2C+\epsilon^2 D & \epsilon D\\
\epsilon^3D & 2C+\epsilon^2D\\ 
\end{pmatrix}+\1_2.$$
Changing $\epsilon|k|^2$ into $\epsilon|k|^2/(2C^2+2DC\epsilon^2)$, it suffices to show that 
x$$\norm{M_\epsilon(x)^{-1}}\leq\frac{1}{1+cx}$$
for all $x\geq0$, with
\begin{align*}
M_\epsilon(x)&:=x\begin{pmatrix}2C+\epsilon^2 D & \epsilon D\\
	\epsilon^3D & 2C+\epsilon^2D\\ 
	\end{pmatrix}+\1_2
\end{align*}
The matrix $M_\epsilon(x)$ has two real positive eigenvalues $\xi_-=1+2Cx$ and $\xi_+=1+ 2x(C+\epsilon^2D)+1$ with $\xi_-<\xi_+$. Hence 
\begin{align*}
\norm{M_\epsilon(x)^{-1}}^{-1}&=1+2Cx.
\end{align*}
As a consequence, $\|M_\epsilon(x)^{-1}\|= (1+2Cx)^{-1}\leq 1$, for all $k\in\R^3$ and for all $\epsilon\ge 0$. 
% \begin{align*}
% M_\epsilon(x)&:=x\begin{pmatrix}2C+\epsilon^2 D & \epsilon D\\
% 	\epsilon^3D & 2C+\epsilon^2D\\ 
% 	\end{pmatrix}+\1_2\\
% 	&=\begin{pmatrix} \epsilon Dx & x(2C+\epsilon^2 D)+1\\
% x(2C+\epsilon^2D)+1 & \epsilon^3Dx
% \end{pmatrix}\begin{pmatrix}0 & 1\\ 1 & 0\end{pmatrix}.
% \end{align*}
% The first matrix on the second line has two real eigenvalues with opposite signs, the smallest in absolute value being negative. Hence 
% \begin{align*}
% \norm{M_\epsilon(x)^{-1}}^{-1}&=\frac12\sqrt{(1-\epsilon^2)^2\epsilon^2D^2x^2+4\left(x(2C+\epsilon^2 D)+1\right)^2}-(1+\epsilon^2)\epsilon Dx\\
% &\geq x(2C+\epsilon^2 D-\epsilon D-\epsilon^3D)+1
% \end{align*}
% which is $\geq1+cx$ for $2C+\epsilon^2 D-\epsilon D-\epsilon^3D>0$.
% Hence $\|M_\epsilon(x)^{-1}\|\leq (1+cx)^{-1}\leq 1$, for all $k\in\R^3$ and all $\epsilon$ small enough. 

Finally, the fact that $\partial_\Xi\cK:[0,\eta)\times \X \to \Y$ is also continuous can be proved with the same arguments.
\end{proof}

% \begin{remark}
% The derivative with respect to $\epsilon$ is
% \begin{align*}
% &\partial_\epsilon \big(\epsilon\cR(\epsilon)|k|^2+\1_2\big)^{-1}\\
% &\qquad\qquad=-|k|^2\big(\epsilon\cR(\epsilon)|k|^2+\1_2\big)^{-1}\big(\cR(\epsilon)+\epsilon\partial_\epsilon\cR(\epsilon)\big)\big(\epsilon\cR(\epsilon)|k|^2+\1_2\big)^{-1}\\
% &\qquad\qquad=-\frac{C|k|^2}{(\epsilon|k|^2+C)^2}+O(1).
% \end{align*}
% which is \emph{not} a bounded multiplier on $H^2$ uniformly with respect to $\epsilon$.
% \end{remark}

Next, we consider the linearization $\cL=\partial_\Xi\cK(0,\Xi_0)$ of the operator $\cK$ at our non-relativistic solution $\Xi_0=(\varphi,\chi,W_+,W_-)\in X$ with
$$\chi=\phi'/(1-\phi^2),\qquad W_+=-\frac{a}4\phi^2+\frac{1}4\chi^2,\qquad W_-=-\phi^2,$$
which is defined by
\begin{equation}
	\label{linearizationoprad}
	%K_{\varphi,\chi,W_+,W_-}(0,\Phi_0)
	\cL(f,g,h_+,h_-)=
	\left(
	\begin{array}{c}
		f' -(1+W_{-})g-\chi h_-\\
		g'+\frac{2}{r}g  - 4 W_{+}f -4\varphi h_+ -b f \\
		h_+ + \frac{a}{2}\varphi f -\frac{1}{2}\chi g\\
		h_- + 2 \varphi f
	\end{array}
	\right).
\end{equation}

\begin{lem}
	\label{Kliniso3d} The operator $\cL:\X\to \Y$ defined as in \eqref{linearizationoprad} is an isomorphism.
\end{lem}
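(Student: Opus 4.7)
The strategy is to eliminate the field unknowns $h_+$ and $h_-$ algebraically using the lower two components of $\cL$, reduce the problem to a first-order Dirac-type system for $(f,g)$ alone, establish its Fredholm property by a compact-perturbation argument, and then conclude by ruling out the kernel via the non-degeneracy result of Theorem~\ref{thm:non-degenerate}.

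First I would exploit that the third and fourth components of $\cL$ are of order zero in $h_+$, $h_-$: the equation $\cL(f,g,h_+,h_-)=(F_1,F_2,G_+,G_-)$ determines uniquely
\[
h_+ = G_+ - \tfrac{a}{2}\phi f + \tfrac{1}{2}\chi g,\qquad h_- = G_- - 2\phi f,
\]
and these lie in $H^2_{\rm rad}$ for every $(f,g)\in\cH^2_{\rm rad}$ and $(G_+,G_-)\in (H^2_{\rm rad})^2$, since $H^2(\R^3)$ is a Banach algebra and the background profiles $\phi=\sin(Q)$ and $\chi=\phi'/(1-\phi^2)$ decay exponentially at infinity by Theorem~\ref{thm:shooting}. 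Plugging these expressions into the first two components yields a reduced Dirac-type system $\cM(f,g) = (\tilde F_1,\tilde F_2)$, with
\[
\cM\vct{f}{g} = \vct{f' + 2\phi\chi\, f - (1-\phi^2) g}{g' + \tfrac{2}{r} g - 2\phi\chi\, g + (3a\phi^2 - \chi^2 - b)\, f},
\]
so that the isomorphism property of $\cL:\X\to\Y$ is equivalent to that of $\cM:\cH^2_{\rm rad}\to\cH^1_{\rm rad}$.

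Next I would split $\cM = \cM_0 + V$, where $\cM_0(f,g) = \bigl(f'-g,\, g'+\tfrac{2}{r}g - bf\bigr)$ is the constant-coefficient operator obtained by letting $\phi,\chi\to 0$. By the same argument used in the proof of Lemma~\ref{Kradregularity}, $\cM_0$ is an isomorphism from $\cH^2_{\rm rad}$ onto $\cH^1_{\rm rad}$, being the minimal-angular-momentum restriction of the Dirac operator $-i\alp\cdot\nabla+\tfrac{b+1}{2}\beta+\tfrac{b-1}{2}$, whose spectrum is $(-\infty,-1]\cup[b,\infty)$ and hence avoids $0$ when $b>0$. The perturbation $V$ is multiplication by a $2\times 2$ matrix whose entries are polynomials in $\phi,\chi$, all of which decay exponentially by~\eqref{eq:exponential_decay}; standard arguments (Rellich--Kondrachov on bounded regions combined with uniform decay outside) then show that multiplication by such functions is compact from $H^2(\R^3)$ to $H^1(\R^3)$, and hence $V:\cH^2_{\rm rad}\to\cH^1_{\rm rad}$ is compact. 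Therefore $\cM$ is Fredholm of index zero, and it remains to prove $\ker\cM=\{0\}$.

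To do this, suppose $\cM(f,g)=0$. Since $|\phi|<1$ everywhere (as $0<Q<\pi/2$), the first equation yields $g=(f'+2\phi\chi f)/(1-\phi^2)$. Substituting into the second equation and simplifying with the background identities $\phi'=\chi(1-\phi^2)$ and $\chi'+\tfrac{2}{r}\chi=\phi(\chi^2-a\phi^2+b)$, a direct calculation should produce precisely $L_1 f=0$, where $L_1$ is the scalar linearized operator from~\eqref{eq:def_L_1}; this is the first-order analogue of the well-known reduction of the system~\eqref{eq:NLS_system_radial} to the scalar equation~\eqref{eq:nonlinear_nospin}. By Theorem~\ref{thm:non-degenerate}, $\ker L_1 = \mathrm{span}(\partial_{x_1}\phi,\dots,\partial_{x_d}\phi)$ in $L^2(\R^3)$; each generator lies in the $\ell=1$ spherical-harmonic sector and thus has no radial component, so $f\in H^2_{\rm rad}$ forces $f=0$, and then $g=0$. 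The main obstacle is the algebraic bookkeeping in this last step, namely verifying that eliminating $g$ from $\cM(f,g)=0$ yields exactly $L_1 f=0$ (and not merely something equivalent in the radial sector); the remainder of the argument is standard once the exponential decay of $(\phi,\chi)$ is in hand.
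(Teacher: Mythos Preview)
Your proposal is correct and follows essentially the same approach as the paper: both arguments eliminate $h_\pm$ via the last two components, identify the constant-coefficient piece $(f,g)\mapsto(f'-g,\,g'+\tfrac{2}{r}g-bf)$ as the restriction of an invertible Dirac operator, treat the remaining multiplication terms as compact because the background profiles decay exponentially, and rule out the kernel by reducing to $L_1f=0$ in the radial sector and invoking Theorem~\ref{thm:non-degenerate}. The only organizational difference is that the paper performs the Fredholm splitting directly on the $4\times4$ operator $\cL=\cL_1+\cL_2$ (with $\cL_1$ acting as the identity on the $(h_+,h_-)$ components) and checks injectivity separately, whereas you first pass to the reduced $2\times2$ operator $\cM$ and then split; this is a cosmetic reordering and your reduction is valid since $H^2(\R^3)$ is an algebra.
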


\begin{proof}
	First we prove that $\cL$ is a one to one operator. Let $(f,g,h_+,h_-)\in \X$ a nontrivial solution to $\cL(f,g,h_+,h_-)=0$. Then, since $(\cR(0)(-\Delta)+\1_2)^{-1}$ is bounded, 
	$$\vct{h_+}{h_-}=\vct{-\frac{a}{2}\varphi f +\frac{1}{2}\chi g}{- 2 \varphi f}$$
	and $(f,g)$ solves
	\begin{equation}
		\left\{
		\begin{aligned}
			&f' -(1-\varphi^2)g+2\varphi\chi f=0\\
			& g'+\frac{2}{r}g - (\chi^2-3a\varphi^2+b)f -2\varphi \chi g=0 \\
		\end{aligned}
		\right..
	\end{equation}
	A calculation shows that the radial function $f$ solves $L_1f=0$ where $L_1$ is the linearized operator defined in~\eqref{eq:def_L_1}. Since the restriction of $L_1$ to radial functions is invertible by Theorems~\ref{thm:radial} and~\ref{thm:non-degenerate}, we conclude that $(f,g)=(0,0)$ and $\cL$ is one-to-one. 

	Next, we observe that $\cL$ can be written as the sum of two linear operators
	\begin{equation*}
		\cL(f,g,h_+,h_-)=\left(
		\begin{array}{c}
			f' -g\\
			g'+\frac{2}{r}g  -b f \\
			h_+ \\
			h_- 
		\end{array}
		\right)
		+
		\left(
	\begin{array}{c}
		-W_{-}g-\chi h_-\\
		- 4 W_{+}f -4\varphi h_+ \\
		\frac{a}{2}\varphi f -\frac{1}{2}\chi g\\
		2 \varphi f
	\end{array}
	\right):=\cL_1+\cL_2.
	\end{equation*}
As we have already said before, the upper part of the operator $\cL_1$ is a restriction of the Dirac operator $-i\alp\cdot\nabla+(b+1)\beta/2+(b-1)/2$ and it is an isomorphism from $\cH^2_{\rm rad}$ to $\cH^1_{\rm rad}$, by definition of these spaces. On the other hand, $\cL_2$ is compact. Therefore $\cL$ is a one-to-one operator that can be written as a sum of an isomorphism and a compact perturbation and it is then an isomorphism. 
\end{proof}

As a conclusion, we can apply the implicit function theorem to find that there exists $\delta>0$ and a function $\Xi\in \mathcal C([0,\delta)\times X)$ such that 
$$\Xi(0)=\left(\phi\,,\,\frac{\phi'}{1-\phi^2}\,,\,-\frac{a}4\phi^2+\frac{1}4\chi^2\,,\,-\phi^2\right)$$ 
and $\cK\big(\epsilon,\Xi(\epsilon)\big)=0$ for $0\le \epsilon <\delta$. This concludes the proof of Theorem~\ref{thm:Dirac}.\qed

\bigskip

%\bibliographystyle{siam}
%\bibliography{biblioUTF8}

\end{document}